\newtheorem{theorem}{Theorem}[section]
\newtheorem{prop}{Proposition}[section]
\newtheorem{lemma}{Lemma}[section]
\newtheorem{remark}{Remark}[section]
\newcommand{\ml}{\mathcal}
\newcommand{\mb}{\mathbb}
\DeclareMathOperator{\lin}{lin}
\DeclareMathOperator{\nlin}{nlin}
\def\XXint#1#2#3{{\setbox0=\hbox{$#1{#2#3}{\int}$ }
		\vcenter{\hbox{$#2#3$ }}\kern-.6\wd0}}
\title{A note on global in-time behavior for the semilinear nonlocal heat exchanger system}
\author[2]{Yan Liu\thanks{Yan Liu (ly801221@163.com)}}
\affil[2]{Department of Applied Mathematics, Guangdong University of Finance,\authorcr 510521 Guangzhou, China}
\date{}
\begin{document}
		\maketitle

		\begin{abstract}
			\medskip
		We mainly study global in-time asymptotic behavior for the nonlocal reaction-diffusion system with fractional Laplacians which models dispersal of individuals between two exchanging environments for its diffusive components and incorporates the Fujita-type power nonlinearities for its reactive components. We derive a global in-time existence result in the super-critical case, and large time asymptotic profiles of global in-time solutions in the general $L^m$ framework. As a byproduct, the sharp lower bound estimates of lifespan for local in-time solutions in the sub-critical and critical cases are determined. These results extend the existence part of \cite{Treton=2024}.
			\\
			
			\noindent\textbf{Keywords:} nonlocal reaction-diffusion system, asymptotic profile, global in-time existence, lifespan estimate\\
			
			\noindent\textbf{AMS Classification (2020)} 35K57, 35R11, 35B40, 35A01 
		\end{abstract}
\fontsize{12}{15}
\selectfont
 
\section{Introduction}\label{Section_Introduction}\setcounter{equation}{0}
\hspace{5mm}In this manuscript, we consider the semilinear nonlocal heat exchanger system in the whole space $\mb{R}^n$ with any dimension $n\geqslant 1$, namely,
\begin{align}\label{Eq-Semi-Nonlocal-Heat-System}
\begin{cases}
u_t+(-\Delta)^{\sigma}u+\mu u-\nu v=u^p,&x\in\mb{R}^n,\ t>0,\\
v_t+(-\Delta)^{\sigma}v-\mu u+\nu v=v^q,&x\in\mb{R}^n,\ t>0,\\
u(0,x)=\varepsilon u_0(x),\ v(0,x)=\varepsilon v_0(x),&x\in\mb{R}^n,
\end{cases}
\end{align}
with the power of fractional Laplacian $\sigma>0$, the constants $\mu,\nu>0$, and $p,q>1$, where the parameter $\varepsilon>0$ denotes the size of initial data. Particularly, the fractional Laplacian $(-\Delta)^{\sigma}:\ H^{2\sigma}\to L^2$ is defined via $(-\Delta)^{\sigma}f:=\ml{F}^{-1}(|\xi|^{2\sigma}f)$. Our main purpose is to derive large time asymptotic profiles of global in-time solutions in the super-critical case
\begin{align*}
	\min\{p,q\}>p_{\mathrm{Fuj}}\left(\frac{n}{\sigma}\right):=1+\frac{2\sigma}{n}\ \ \mbox{with any}\ \ \sigma>0.
\end{align*} As our byproduct, the sharp lower bound estimates of lifespan (with respect to $\varepsilon$) will be given in the sub-critical and critical cases, i.e. $\min\{p,q\}\leqslant p_{\mathrm{Fuj}}(\frac{n}{\sigma})$, in which the lifespan $T_{\varepsilon}$ of solutions is understood as the quantity by
\begin{align*}
T_{\varepsilon}&:=\sup\{T\in(0,+\infty):\ \mbox{there exist unique local in-time solutions }(u,v)\mbox{ to the Cauchy}\\
&\qquad\qquad\qquad\qquad\qquad\ \mbox{problem \eqref{Eq-Semi-Nonlocal-Heat-System} on }[0,T)\mbox{ with a fixed parameter }\varepsilon>0\}.
\end{align*}

Taking $\sigma=1$ in the semilinear Cauchy problem \eqref{Eq-Semi-Nonlocal-Heat-System}, it will immediately turn into the  Fujita-type semilinear local heat exchanger system (cf. \cite{Treton=2024}), which is strongly motivated by the biological issue, precisely, the field-road reaction-diffusion system (cf. \cite{Berestycki-Roquejoffre-Rossi=2013,Alfaro-Ducasse-Treton=2023}). In this recent paper \cite{Treton=2024}, the author derived a global in-time existence result (associated with $L^{\infty}$ estimates of solutions)  if $\min\{p,q\}>p_{\mathrm{Fuj}}(n)$, and a finite time blow-up result if $\min\{p,q\}<p_{\mathrm{Fuj}}(n)$ to the semilinear Cauchy problem \eqref{Eq-Semi-Nonlocal-Heat-System} with $\sigma=1$. Note that $p_{\mathrm{Fuj}}(n):=1+\frac{2}{n}$
 is the well-known critical exponent (cf. \cite{Fujita=1966,Hayakawa=1973} and references therein) for the semilinear heat equation $U_t-\Delta U=U^p$ in $\mb{R}^n$. However, to the best of knowledge of authors, some detailed information of solutions, including large time behavior of global in-time solutions and sharp estimates of lifespan, are generally unknown questions, even in the special (local) case $\sigma=1$. We will partly answer these questions for the Fujita-type semilinear nonlocal heat exchanger system \eqref{Eq-Semi-Nonlocal-Heat-System} in this manuscript.

The corresponding linearized model to \eqref{Eq-Semi-Nonlocal-Heat-System} with $\varepsilon=1$, i.e.
\begin{align}\label{Eq-Linear-Heat-Exchanger-System}
\begin{cases}
\bar{u}_t+(-\Delta)^{\sigma}\bar{u}+\mu \bar{u}-\nu \bar{v}=0,&x\in\mb{R}^n,\ t>0,\\
\bar{v}_t+(-\Delta)^{\sigma}\bar{v}-\mu \bar{u}+\nu \bar{v}=0,&x\in\mb{R}^n,\ t>0,\\
\bar{u}(0,x)= \bar{u}_0(x),\ \bar{v}(0,x)= \bar{v}_0(x),&x\in\mb{R}^n,
\end{cases}
\end{align}
has the heat exchanger from the coupling diffusive components $(-\nu \bar{v},-\mu \bar{u})^{\mathrm{T}}$. As mentioned in \cite{Treton=2024}, the nonlocal coupled system \eqref{Eq-Linear-Heat-Exchanger-System} may be interpreted as a population dynamics model for a single species dispersing on two parallel environments and switching from one to another. Note that the fractional Laplacian $(-\Delta)^{\sigma}$ substitutes for the classical one $-\Delta$, resulting a different (sometimes wider and faster) spread \cite{Baeumer-Kovacs-Meer=2007}. We later will analyze sharp large time behavior of solutions by the Fourier analysis (see, for example, \cite{Ikehata-Takeda=2019}).

Let us briefly recall the decoupled model, i.e. $\mu=\nu=0$ in the Cauchy problem \eqref{Eq-Semi-Nonlocal-Heat-System}, namely,
\begin{align}\label{Eq-Single-Fractional}
\begin{cases}
U_t+(-\Delta)^{\sigma}U=U^p,&x\in\mb{R}^n,\ t>0,\\
U(0,x)=\varepsilon U_0(x),&x\in\mb{R}^n,
\end{cases}
\end{align}with $\sigma>0$ in general.
The classical works \cite{Sugitani=1975,Gal-Pohoza=2002,Ishige-Kawakami-Kobayashi=2014,Hisa-Ishige=2018} and references therein derived the critical exponent $p=p_{\mathrm{Fuj}}(\frac{n}{\sigma})$, namely, a global in-time small data solution uniquely exists if $p>p_{\mathrm{Fuj}}(\frac{n}{\sigma})$ whereas local in-time solutions blow up in finite time with $L^1$ data if $1<p\leqslant p_{\mathrm{Fuj}}(\frac{n}{\sigma})$. Furthermore, the lifespan $T_{\varepsilon}^U$ of $U=U(t,x)$ in \eqref{Eq-Single-Fractional} as $0<\varepsilon\ll 1$ is precisely estimated by
	\begin{align*}
	T_{\varepsilon}^U
	\begin{cases}
		\approx \displaystyle{\varepsilon^{-\frac{p-1}{1-\frac{n}{2\sigma}(p-1)}}}&\mbox{if}\ \ p<p_{\mathrm{Fuj}}(\frac{n}{\sigma}),\\
		\approx \exp\left(C\varepsilon^{-\frac{2\sigma}{n}}\right)&\mbox{if}\ \ p=p_{\mathrm{Fuj}}(\frac{n}{\sigma}),\\
		=+\infty&\mbox{if}\ \ p>p_{\mathrm{Fuj}}(\frac{n}{\sigma}).
	\end{cases}
\end{align*}
The above sharp relation $f\approx g$ holds if and only if $g\lesssim f\lesssim g$, where the unexpressed multiplicative constants are independent of $\varepsilon$. Later, some papers (e.g. \cite{Pang-Sun-Wang=2006,Wu-Tang=2015}) generalized to weakly coupled systems whose coupling effect arises in the nonlinear part $(V^p,U^q)^{\mathrm{T}}$, namely, reaction-coupled systems. Nevertheless, our situation is more complicated due to the coupling diffusive structure $(-\nu v,-\mu u)^{\mathrm{T}}$ in the linear part of the diffusion-coupled system \eqref{Eq-Semi-Nonlocal-Heat-System}. Especially, it seems not clear so far how do the coupling coefficients $\mu,\nu$ influence on asymptotic behavior of solutions.

Due to some technical difficulties from the nonlocal operator $(-\Delta)^{\sigma}$ with any $\sigma>0$, our proof of global in-time existence in the super-critical case is different from the one in \cite{Treton=2024}. Note that our situation differs from $\sigma=1$ due to the lack of comparison principle in general (e.g. $\sigma>1$).  Precisely, we are going to apply the Banach fixed point argument to the suitable time-weighted solution space $\ml{C}([0,+\infty),L^1\cap L^{\infty})$ instead of nonlinear differential inequalities. This will bring some benefits, for example, the study of large time asymptotic behavior of global in-time solutions and lower bound estimates of lifespan in sub-critical and critical case.


\section{Main results}\label{Section_Results}\setcounter{equation}{0}
\hspace{5mm} Let us denote the fractional diffusion (heat) kernel
\begin{align*}
G=G(t,x):=\ml{F}^{-1}_{\xi\to x}\left(\mathrm{e}^{-|\xi|^{2\sigma}t}\right),
\end{align*} which also can be understood by $\ml{G}(t,|D|)$ associated with its Fourier transform $\widehat{\ml{G}}(t,|\xi|)=\mathrm{e}^{-|\xi|^{2\sigma}t}$. The differential operator $|D|$ has its symbol $|\xi|$. We next introduce the solutions' operators
\begin{align*}
K_0^u(t,|D|)&:=\frac{\nu+\mu\,\mathrm{e}^{-(\mu+\nu)t}}{\mu+\nu}\,\ml{G}(t,|D|),\ \ K_1^u(t,|D|):=\frac{\nu-\nu\,\mathrm{e}^{-(\mu+\nu)t}}{\mu+\nu}\,\ml{G}(t,|D|),\\
K_0^v(t,|D|)&:=\frac{\mu-\mu\,\mathrm{e}^{-(\mu+\nu)t}}{\mu+\nu}\,\ml{G}(t,|D|),\ \ K_1^v(t,|D|):=\frac{\mu+\nu\,\mathrm{e}^{-(\mu+\nu)t}}{\mu+\nu}\,\ml{G}(t,|D|).
\end{align*}

\begin{theorem}\label{Thm=Global-Solution}
Let $u_0,v_0\in L^{\infty}\cap L^1$ and $\min\{p,q\}>p_{\mathrm{Fuj}}(\frac{n}{\sigma})$. Then, there exists $\varepsilon_0>0$ such that for any $\varepsilon\in(0,\varepsilon_0]$ the semilinear nonlocal heat exchanger system \eqref{Eq-Semi-Nonlocal-Heat-System} with $\sigma>0$ has uniquely determined global in-time solutions
\begin{align*}
(u,v)\in\ml{C}([0,+\infty),L^{m_1})\times \ml{C}([0,+\infty),L^{m_2})
\end{align*}
for any $1\leqslant m_1,m_2\leqslant+\infty$. Furthermore, they satisfy the equivalent integral equations
\begin{align}
w(t,x)&=K_0^w(t,|D|)u_0(x)+K_1^w(t,|D|)v_0(x)\notag\\
&\quad+\int_0^t\big(K_0^w(t-\tau,|D|)[u(\tau,x)]^p+K_1^w(t-\tau,|D|)[v(\tau,x)]^q\big)\,\mathrm{d}\tau,\label{Equa-01}
\end{align}
and the following decay estimates:
\begin{align}
\|w(t,\cdot)\|_{L^{m}}&\lesssim \varepsilon(1+t)^{-\frac{n}{2\sigma}(1-\frac{1}{m})}\|(u_0,v_0)\|_{(L^{\infty}\cap L^1)^2},\label{Est-03}
\end{align}
where $m=m_1$ if $w=u$; $m=m_2$ if $w=v$.
\end{theorem}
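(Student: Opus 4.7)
The plan is to run a Banach fixed-point argument for the operator $\Phi=(\Phi^u,\Phi^v)$ defined by the right-hand side of the Duhamel formula \eqref{Equa-01}, in a time-weighted subspace of $\ml{C}([0,+\infty),L^1\cap L^\infty)^2$. First, I would justify the representation \eqref{Equa-01} itself by diagonalising the linearised system \eqref{Eq-Linear-Heat-Exchanger-System} on the Fourier side: the coefficient matrix $\bigl(\begin{smallmatrix}|\xi|^{2\sigma}+\mu & -\nu\\ -\mu & |\xi|^{2\sigma}+\nu\end{smallmatrix}\bigr)$ has eigenvalues $|\xi|^{2\sigma}$ and $|\xi|^{2\sigma}+\mu+\nu$; writing down the corresponding fundamental matrix and combining with Duhamel's principle reproduces precisely the four operators $K_j^w(t,|D|)$ introduced before the statement.

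Second, since every scalar prefactor appearing in $K_j^w$ is uniformly bounded in $t\geqslant 0$, these operators inherit the standard Young-type estimates of $\ml{G}(t,|D|)$:
$$\|K_j^w(t,|D|)f\|_{L^m}\lesssim t^{-\frac{n}{2\sigma}(\frac{1}{r}-\frac{1}{m})}\|f\|_{L^r}\qquad (1\leqslant r\leqslant m\leqslant\infty,\ t>0),$$
together with the short-time bound $\|K_j^w(t,|D|)f\|_{L^m}\lesssim\|f\|_{L^m}$. Interpolating delivers the uniform estimate $\|K_j^w(t,|D|)f\|_{L^m}\lesssim(1+t)^{-\frac{n}{2\sigma}(1-\frac{1}{m})}\|f\|_{L^1\cap L^\infty}$, which immediately controls the linear part of $\Phi$ by $C\varepsilon\|(u_0,v_0)\|_{L^1\cap L^\infty}$.

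Third, I would work in the Banach space
$$X=\Bigl\{(u,v)\in\ml{C}([0,+\infty),L^1\cap L^\infty)^2:\ \|(u,v)\|_X<+\infty\Bigr\},$$
$$\|(u,v)\|_X:=\sup_{t\geqslant 0}\sum_{w\in\{u,v\}}\Bigl[\|w(t)\|_{L^1}+(1+t)^{\frac{n}{2\sigma}}\|w(t)\|_{L^\infty}\Bigr],$$
and show that $\Phi$ maps a small ball of $X$ into itself and is a contraction. For the nonlinear part I would employ the bounds
$$\|[u(\tau)]^p\|_{L^1}\lesssim(1+\tau)^{-\frac{n}{2\sigma}(p-1)}\|(u,v)\|_X^p,\qquad \|[u(\tau)]^p\|_{L^\infty}\lesssim(1+\tau)^{-\frac{np}{2\sigma}}\|(u,v)\|_X^p,$$
(the first by interpolating between the $L^1$ and $L^\infty$ components of the $X$-norm), and analogously for $v^q$. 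Splitting the Duhamel integral at $t/2$ and invoking the classical lemma $\int_0^t(1+t-\tau)^{-a}(1+\tau)^{-b}\,\mathrm{d}\tau\lesssim(1+t)^{-\min\{a,b\}}$ valid whenever $\max\{a,b\}>1$, one reproduces the decay $(1+t)^{-\frac{n}{2\sigma}(1-\frac{1}{m})}$ in both endpoints $m\in\{1,\infty\}$. The super-critical hypothesis $\min\{p,q\}>p_{\mathrm{Fuj}}(n/\sigma)$ enters at exactly this point, guaranteeing $\frac{n}{2\sigma}(p-1),\frac{n}{2\sigma}(q-1)>1$. A Lipschitz estimate for $\Phi$ is obtained by the same mechanism together with the elementary bound $|a^p-b^p|\lesssim(|a|^{p-1}+|b|^{p-1})|a-b|$. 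Once the fixed point is secured in $X$, the general $L^m$ estimate \eqref{Est-03} and time continuity follow from the interpolation inequality $\|w(t)\|_{L^m}\leqslant\|w(t)\|_{L^1}^{1/m}\|w(t)\|_{L^\infty}^{1-1/m}$ together with the $L^1\cap L^\infty$-strong continuity of $\ml{G}(t,|D|)$.

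The anticipated main obstacle is the diffusion-coupled structure: unlike the decoupled setting \eqref{Eq-Single-Fractional}, one has to juggle all four cross-kernels $K_0^u,K_1^u,K_0^v,K_1^v$ simultaneously with the two distinct nonlinear exponents $p$ and $q$, and verify that both exponents close the time integrals at the endpoints $m=1$ and $m=\infty$ under a single smallness threshold $\varepsilon_0$, since these two endpoint rates govern every intermediate $L^m$ norm via interpolation.
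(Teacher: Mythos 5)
Your proposal is correct and follows essentially the same route as the paper: a Banach fixed-point argument in the identical time-weighted space $\ml{C}([0,+\infty),L^1\cap L^\infty)^2$ with the norm $\sup_t\sum_w(\|w(t)\|_{L^1}+(1+t)^{\frac{n}{2\sigma}}\|w(t)\|_{L^\infty})$, the same splitting of the Duhamel integral at $t/2$, and interpolation to reach general $L^m$. The only cosmetic differences are that the paper obtains the kernels $K_j^w$ by reducing to a second-order ODE in $t$ on the Fourier side rather than by diagonalising the $2\times2$ symbol matrix, and that it tracks a constant $C_1(T)$ on finite intervals before letting $T\to+\infty$.
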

\begin{remark}
For the local case $\sigma=1$, our condition on the exponents $\min\{p,q\}$ and the decay estimates of solutions (by taking $m_1=m_2=+\infty$) exactly coincide with those in \cite[Theorem 2.3]{Treton=2024}. His result relied on the smallness of $\widehat{u}_0,\widehat{v}_0\in L^1$, which implies our condition on $u_0,v_0\in L^{\infty}$. For another, we do not assume the non-negativity of initial data.
\end{remark}
\begin{remark}
Our mild solution $u$ or $v$ given by \eqref{Equa-01} for the diffusion-coupled system contains mixed information of all data $u_0,v_0$ and all nonlinearities $u^p,v^q$, instead of single information for the reaction-coupled system, e.g. \cite[Equality (2.1)]{Pang-Sun-Wang=2006}. 
\end{remark}

We next state large time profiles for the global in-time solutions $(u,v)$ defined in Theorem \ref{Thm=Global-Solution}. To be specific, by subtracting their corresponding profiles associated with the integrals of summable functions, i.e. $P_f:=\int_{\mb{R}^n}f(x)\,\mathrm{d}x$ and $\ml{P}_f:=\int_{0}^{+\infty}\int_{\mb{R}^n}f(t,x)\,\mathrm{d}x\,\mathrm{d}t$, some faster decay estimates arise, comparing with \eqref{Est-03}.
\begin{theorem}\label{Thm=Large-time}
Let $u_0,v_0\in L^{\infty}\cap L^1$ and $\min\{p,q\}>p_{\mathrm{Fuj}}(\frac{n}{\sigma})$. Then, the global in-time small data solutions $(u,v)$ obtained in Theorem \ref{Thm=Global-Solution} with $\varepsilon\in(0,\varepsilon_0]$ satisfy the following refined decay estimates:
	\begin{align*}
		\lim\limits_{t\to+\infty}t^{\frac{n}{2\sigma}(1-\frac{1}{m})}\left\|w(t,\cdot)-\frac{\gamma}{\mu+\nu}\,G(t,\cdot)\left(\varepsilon P_{u_0+v_0}+\ml{P}_{u^p+v^q}\right)\right\|_{L^{m}}=0,
	\end{align*}
where $m=m_1$ and $\gamma=\nu$ if $w=u$; $m=m_2$ and $\gamma=\mu$ if $w=v$, for any $1\leqslant m_1,m_2\leqslant+\infty$. Particularly, the following optimal large time $L^2$ estimates:
	\begin{align*}
	t^{-\frac{n}{4\sigma}}\left|\varepsilon P_{u_0+v_0}+\ml{P}_{u^p+v^q}\right|\lesssim \|w(t,\cdot)\|_{L^2}\lesssim \varepsilon\, t^{-\frac{n}{4\sigma}}\|(u_0,v_0)\|_{(L^{\infty}\cap L^1)^2}
	\end{align*}
	hold for $w\in\{u,v\}$, provided that $\varepsilon P_{u_0+v_0}+\ml{P}_{u^p+v^q}\neq0$.
\end{theorem}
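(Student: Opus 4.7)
The plan is to start from the Duhamel representation \eqref{Equa-01} and isolate the common asymptotic profile of the four kernels $K_j^w$. Writing
\[
K_0^u=\tfrac{\nu}{\mu+\nu}\,\ml{G}+\tfrac{\mu\,\mathrm{e}^{-(\mu+\nu)t}}{\mu+\nu}\,\ml{G},\quad K_1^u=\tfrac{\nu}{\mu+\nu}\,\ml{G}-\tfrac{\nu\,\mathrm{e}^{-(\mu+\nu)t}}{\mu+\nu}\,\ml{G},
\]
and similarly for $K_j^v$, every kernel splits as $\tfrac{\gamma}{\mu+\nu}\ml{G}(t,|D|)$ plus a remainder that carries an extra factor $\mathrm{e}^{-(\mu+\nu)t}$. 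Applied to $L^1$ data or to $u^p,v^q$ (whose $L^1$ norms are under control thanks to Theorem \ref{Thm=Global-Solution}), this exponential factor makes the remainders $o(t^{-\frac{n}{2\sigma}(1-\frac{1}{m})})$ in $L^m$, so they will be harmless.

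The linear piece then reduces to proving the standard fractional-heat profile identity
\[
\lim_{t\to+\infty}t^{\frac{n}{2\sigma}(1-\frac{1}{m})}\bigl\|\ml{G}(t,|D|)f-P_f\,G(t,\cdot)\bigr\|_{L^m}=0\qquad(f\in L^1\cap L^{\infty}),
\]
which I would establish by a density argument: verify it for Schwartz $f$ with $\int f=P_f$ by a direct Fourier/Taylor expansion of $\mathrm{e}^{-|\xi|^{2\sigma}t}(\widehat{f}(\xi)-\widehat{f}(0))$, then extend to the whole space using the uniform bound $\|\ml{G}(t)f\|_{L^m}\lesssim t^{-\frac{n}{2\sigma}(1-\frac{1}{m})}\|f\|_{L^1}$. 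Taking $f=u_0$ and $f=v_0$ and adding produces exactly the data contribution $\frac{\nu}{\mu+\nu}G(t,\cdot)\,\varepsilon P_{u_0+v_0}$ for $w=u$, and symmetrically for $w=v$.

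The main work is the nonlinear Duhamel term $I(t,x):=\int_0^t\ml{G}(t-\tau,|D|)\bigl([u]^p+[v]^q\bigr)(\tau,x)\,\mathrm{d}\tau$, which must be compared with $G(t,\cdot)\,\ml{P}_{u^p+v^q}$. The super-criticality $\min\{p,q\}>p_{\mathrm{Fuj}}(\tfrac{n}{\sigma})$ together with \eqref{Est-03} gives $\|[u]^p(\tau)\|_{L^1}\lesssim\varepsilon^p(1+\tau)^{-\frac{n(p-1)}{2\sigma}}$ with exponent strictly greater than $1$ (analogously for $[v]^q$), so the $L^1$ norms are time-integrable. I would split $\int_0^t=\int_0^{t/2}+\int_{t/2}^t$. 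On $[t/2,t]$, apply $L^1\to L^m$ estimates for $\ml{G}(t-\tau)$ and the above decay to bound the resulting $L^m$ norm by $\int_{t/2}^t(t-\tau)^{-\frac{n}{2\sigma}(1-\frac{1}{m})}(1+\tau)^{-\frac{n(p-1)}{2\sigma}}\,\mathrm{d}\tau$, which is $o(t^{-\frac{n}{2\sigma}(1-\frac{1}{m})})$. On $[0,t/2]$, first replace $\ml{G}(t-\tau,|D|)f(\tau)$ by $G(t-\tau,\cdot)\int f(\tau,x)\,\mathrm{d}x$ using the linear profile identity applied for each frozen $\tau$, with integrable majorant; then replace $G(t-\tau,\cdot)$ by $G(t,\cdot)$ (using $\partial_t G$ bounds, i.e.\ $\|G(t-\tau)-G(t)\|_{L^m}\lesssim\tau\,t^{-\frac{n}{2\sigma}(1-\frac{1}{m})-1}$); finally add the small tail $G(t,\cdot)\int_{t/2}^{+\infty}\!\int(u^p+v^q)\mathrm{d}x\,\mathrm{d}\tau$, whose coefficient tends to $0$. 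Dominated convergence then delivers the stated limit. The delicate point is the simultaneous control of the two replacements on $[0,t/2]$, since the error estimate must be uniform in $\tau$ and integrable.

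Finally, the $L^2$ two-sided bound is an easy consequence. The upper bound is \eqref{Est-03} with $m=2$. For the lower bound, combine the limit just proved (in $L^2$) with $\|G(t,\cdot)\|_{L^2}\approx t^{-\frac{n}{4\sigma}}$ (by Plancherel) and the reverse triangle inequality: for $t$ large,
\[
\|w(t,\cdot)\|_{L^2}\geqslant\tfrac{|\gamma|}{\mu+\nu}\bigl|\varepsilon P_{u_0+v_0}+\ml{P}_{u^p+v^q}\bigr|\,\|G(t,\cdot)\|_{L^2}-o(t^{-\frac{n}{4\sigma}}),
\]
which yields the desired $t^{-\frac{n}{4\sigma}}$ lower bound whenever $\varepsilon P_{u_0+v_0}+\ml{P}_{u^p+v^q}\neq 0$.
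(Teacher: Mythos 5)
Your overall architecture is essentially the paper's: split each kernel $K_j^w$ into $\frac{\gamma}{\mu+\nu}\,\ml{G}(t,|D|)$ plus an exponentially damped remainder, compare the Duhamel integral over $[0,t/2]$ with $G(t,\cdot)$ times the space--time integral of the nonlinearity, discard the tail $\int_{t/2}^{+\infty}$, and obtain the $L^2$ lower bound from the reverse triangle inequality together with $\|G(t,\cdot)\|_{L^2}\approx t^{-\frac{n}{4\sigma}}$. The only real methodological difference is the linear profile identity: the paper proves it directly by a mean value theorem in $x$ plus a splitting at $|y|=t^{\frac{1}{4\sigma}}$, while you propose a density argument; that variant is fine, but note that for $m=1$ the ``direct Fourier/Taylor expansion'' step is not literally valid (the $L^1$ norm of an inverse Fourier transform is not controlled by pointwise bounds on the symbol), so even in the Schwartz case you should argue in physical space via $\|\nabla G(t,\cdot)\|_{L^m}\lesssim t^{-\frac{n}{2\sigma}(1-\frac{1}{m})-\frac{1}{2\sigma}}$ and $\int |y|\,|f(y)|\,\mathrm{d}y<+\infty$, which is exactly the paper's mechanism.

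There is, however, one step that fails as written: your bound on $[t/2,t]$. Using the $L^1\to L^m$ smoothing estimate you arrive at $\int_{t/2}^t(t-\tau)^{-\frac{n}{2\sigma}(1-\frac{1}{m})}(1+\tau)^{-\frac{n(p-1)}{2\sigma}}\,\mathrm{d}\tau$, whose kernel singularity at $\tau=t$ is not integrable once $\frac{n}{2\sigma}\left(1-\frac{1}{m}\right)\geqslant 1$ --- e.g. $m=+\infty$ with $n\geqslant 2\sigma$, a case explicitly covered by the theorem --- so the claimed majorant is infinite there. The repair is the one the paper uses (both here, in the term $A_4$, and already in the existence proof): near $\tau=t$ drop the smoothing and use the non-singular $L^m$--$L^m$ estimate $\|\ml{G}(t-\tau,|D|)g\|_{L^m}\lesssim\|g\|_{L^m}$ combined with $\|\,[u(\tau,\cdot)]^p\|_{L^m}\lesssim\varepsilon^p(1+\tau)^{-\frac{n}{2\sigma}(p-\frac{1}{m})}$, which gives $t^{\frac{n}{2\sigma}(1-\frac{1}{m})}\|\cdot\|_{L^m}\lesssim\varepsilon^p\,t^{1-\frac{n}{2\sigma}(p-1)}\to 0$ by super-criticality. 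The same remark applies to your exponentially damped remainder kernels on $[t/2,t]$, where $\mathrm{e}^{-(\mu+\nu)(t-\tau)}$ is not small in $t$; there too the $L^m$--$L^m$ bound (not $L^1\to L^m$) closes the estimate. With these corrections your proof goes through and coincides in substance with the paper's.
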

\begin{remark}
The large time asymptotic profiles of solutions to the semilinear nonlocal heat exchanger system \eqref{Eq-Semi-Nonlocal-Heat-System} can be explained by the fractional diffusion function $G(t,x)$ multiplying two crucial integrals: the sum of initial data $u_0+v_0$ and the sum of nonlinearities $u^p+v^q$. This phenomenon is our main discovery caused by the coupling structure from the parameters $\mu,\nu\neq0$.
\end{remark}

\begin{remark}
Surprisingly, the parameter $\nu$ [resp. $\mu$] plays a crucial influence on large time behavior of $u$ [resp. $v$], although the coefficient of lower-order term $u$ in \eqref{Eq-Semi-Nonlocal-Heat-System}$_1$ is $\mu$ [resp. $v$ in \eqref{Eq-Semi-Nonlocal-Heat-System}$_2$ is $\nu$].
\end{remark}

\begin{remark}\label{Remark-Blow-up}
In Theorem \ref{Thm=Global-Solution}, we have derived the global in-time existence result in the super-critical case $\min\{p,q\}>p_{\mathrm{Fuj}}(\frac{n}{\sigma})$. We expect in the remaining cases, i.e. the sub-critical case $\min\{p,q\}<p_{\mathrm{Fuj}}(\frac{n}{\sigma})$ and the critical case $\min\{p,q\}=p_{\mathrm{Fuj}}(\frac{n}{\sigma})$, every non-trivial solution will blow up in finite time with $L^1$ data. Actually, \cite[Theorem 2.4]{Treton=2024} stated the systematic blow-up in the sub-critical case when $\sigma=1$. In the symmetric case $p=q$, it seems not difficult to justify blow-up of local in-time solutions in the sub-critical and critical cases thanks to the sum of \eqref{Eq-Semi-Nonlocal-Heat-System}$_1$ and \eqref{Eq-Semi-Nonlocal-Heat-System}$_2$, precisely,
\begin{align*}
(u+v)_t+(-\Delta)^{\sigma}(u+v)=u^p+v^p,
\end{align*}
which is similar to $U_t+(-\Delta)^{\sigma}U=U^p$ with $U=u+v$. However, the blow-up phenomenon in the non-symmetric case $p\neq q$ is still unknown.
\end{remark}

As we conjectured in Remark \ref{Remark-Blow-up} and from the blow-up phenomenon derived in \cite[Theorem 2.4]{Treton=2024} when $\sigma=1$, in the sub-critical case $\min\{p,q\}<p_{\mathrm{Fuj}}(\frac{n}{\sigma})$ and the critical case $\min\{p,q\}=p_{\mathrm{Fuj}}(\frac{n}{\sigma})$, every non-trivial local in-time solution to the semilinear Cauchy problem \eqref{Eq-Semi-Nonlocal-Heat-System} will blow up in finite time. For this reason, we in the next result describe more detailed information of lifespan $T_{\varepsilon}$. Notice that $T_{\varepsilon}=+\infty$ if $\min\{p,q\}>p_{\mathrm{Fuj}}(\frac{n}{\sigma})$ and $\varepsilon\in(0,\varepsilon_0]$ according to Theorem \ref{Thm=Global-Solution}.

%

\begin{theorem}\label{Thm=lower-lifespan}
	Let  $u_0,v_0\in L^{\infty}\cap L^1$. Then, the semilinear nonlocal heat exchanger system \eqref{Eq-Semi-Nonlocal-Heat-System} with $\sigma>0$ has uniquely determined local in-time solutions
	\begin{align*}
		(u,v)\in\ml{C}([0,T],L^{m_1})\times \ml{C}([0,T],L^{m_2})
	\end{align*}
	for any $1\leqslant m_1,m_2\leqslant +\infty$ and any $\varepsilon>0$, satisfying the decay estimates \eqref{Est-03} for any $t\in[0,T]$ where
	\begin{align*}
		T\lesssim 
		\begin{cases}
			\displaystyle{\varepsilon^{-\frac{\min\{p,q\}-1}{1-\frac{n}{2\sigma}(\min\{p,q\}-1)}}}&\mbox{if}\ \ \min\{p,q\}<p_{\mathrm{Fuj}}(\frac{n}{\sigma}),\\
			\exp\left(C\varepsilon^{-\frac{2\sigma}{n}}\right)&\mbox{if}\ \ \min\{p,q\}=p_{\mathrm{Fuj}}(\frac{n}{\sigma}).
		\end{cases}
	\end{align*}
	Note that the constant $C$ is independent of $\varepsilon$. That is to say, the lifespan $T_{\varepsilon}$ of solutions from the below side is estimated by
	\begin{align*}
	T_{\varepsilon}\gtrsim
	\begin{cases}
		\displaystyle{\varepsilon^{-\frac{\min\{p,q\}-1}{1-\frac{n}{2\sigma}(\min\{p,q\}-1)}}}&\mbox{if}\ \ \min\{p,q\}<p_{\mathrm{Fuj}}(\frac{n}{\sigma}),\\
		\exp\left(C\varepsilon^{-\frac{2\sigma}{n}}\right)&\mbox{if}\ \ \min\{p,q\}=p_{\mathrm{Fuj}}(\frac{n}{\sigma}).
	\end{cases}
	\end{align*}
\end{theorem}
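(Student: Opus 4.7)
The plan is to adapt the Banach fixed-point argument outlined for Theorem \ref{Thm=Global-Solution} to a finite time horizon $[0,T]$. The structural change is that in the sub-critical and critical cases the time-convolution of the nonlinear contributions no longer converges uniformly as $T\to+\infty$, so $T$ must be tuned against $\varepsilon$ to keep a small bounded quantity on the right-hand side. I would work on the complete metric space $X_T:=\ml{C}([0,T],L^1\cap L^{\infty})\times\ml{C}([0,T],L^1\cap L^{\infty})$ equipped with the time-weighted norm
$$\|(u,v)\|_{X_T}:=\sup_{0\le t\le T}\sum_{w\in\{u,v\}}\sum_{m\in\{1,+\infty\}}(1+t)^{\frac{n}{2\sigma}(1-\frac{1}{m})}\|w(t,\cdot)\|_{L^m},$$
and define the nonlinear map $\Phi(u,v)$ by taking the right-hand sides of \eqref{Equa-01} as its two components.

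Since every kernel $K_j^w(t,|D|)$ equals $\ml{G}(t,|D|)$ multiplied by a coefficient that is bounded uniformly in $t$, the standard fractional heat $L^r$–$L^m$ bound gives $\|K_j^w(t,|D|)f\|_{L^m}\lesssim (1+t)^{-\frac{n}{2\sigma}(\frac{1}{r}-\frac{1}{m})}\|f\|_{L^r}$ for $1\le r\le m\le+\infty$. Applied to the linear data part, this already yields the desired $\varepsilon(1+t)^{-\alpha_m}$ bound with $\alpha_m:=\frac{n}{2\sigma}(1-\frac{1}{m})$. For the nonlinear part, I would use $\|u^p\|_{L^1}\le\|u\|_{L^{\infty}}^{p-1}\|u\|_{L^1}$ and $\|u^p\|_{L^{\infty}}=\|u\|_{L^{\infty}}^p$ (and analogously for $v^q$), and interpolate to recover arbitrary $L^{m}$ with $1\le m\le +\infty$. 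All estimates then reduce to controlling
$$J_p(t):=\int_0^t(1+t-\tau)^{-\alpha_m}(1+\tau)^{-\frac{n(p-1)}{2\sigma}}\,\mathrm{d}\tau,$$
which by a split at $\tau=t/2$ satisfies $J_p(t)\lesssim (1+t)^{-\alpha_m}A_p(t)$, with $A_p(t)$ uniformly bounded when $p>p_{\mathrm{Fuj}}(\frac{n}{\sigma})$ (recovering Theorem \ref{Thm=Global-Solution}), $A_p(t)\sim \log(2+t)$ at the critical exponent, and $A_p(t)\sim t^{\,1-\frac{n(p-1)}{2\sigma}}$ in the sub-critical case. Because both $u^p$ and $v^q$ appear in the equation for each of $u$ and $v$, the worst of the two exponents dictates the outcome, so $p$ is replaced by $\min\{p,q\}$ throughout.

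Closing the argument on the ball of radius $2C\varepsilon\|(u_0,v_0)\|_{(L^{\infty}\cap L^1)^2}$ in $X_T$ then requires the smallness condition $\varepsilon^{\min\{p,q\}-1}A_{\min\{p,q\}}(T)\le c$ for some small absolute $c>0$. Inverting this inequality produces exactly the claimed lifespan lower bound, namely $T\lesssim\varepsilon^{-(\min\{p,q\}-1)/(1-\frac{n}{2\sigma}(\min\{p,q\}-1))}$ in the sub-critical case and $T\lesssim\exp(C\varepsilon^{-2\sigma/n})$ at criticality. Contraction (and hence uniqueness) is obtained from the pointwise estimate $|a^p-b^p|\lesssim(|a|^{p-1}+|b|^{p-1})|a-b|$ combined with the same convolution machinery. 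The main obstacle I expect is the critical case: one has to track the logarithmic growth in $A_p(t)$ precisely, rather than absorbing it into a constant, so that the $\exp(C\varepsilon^{-2\sigma/n})$ scale emerges cleanly; a secondary, but purely bookkeeping, difficulty is bounding the cross-terms through $K_0^w$ and $K_1^w$ uniformly for all admissible pairs $(m_1,m_2)$ so that the decay estimate \eqref{Est-03} is preserved on $[0,T]$.
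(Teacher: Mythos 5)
Your proposal is correct and follows essentially the same route as the paper: a Banach fixed-point argument in the time-weighted space $X_T$ built on $L^1\cap L^\infty$, with the nonlinear convolution integral split at $\tau=t/2$ and the resulting factor (bounded, logarithmic, or polynomially growing in $T$ according to whether $\min\{p,q\}$ is super-critical, critical, or sub-critical) balanced against $\kappa^{\min\{p,q\}-1}\approx\varepsilon^{\min\{p,q\}-1}$ to extract the lifespan bound. No gaps; your $A_p(T)$ is exactly the paper's constant $C_1(T)$ in \eqref{Est-Important-03}.
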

\begin{remark}
It is worth noting that our global in-time existence result in Theorem \ref{Thm=Global-Solution} and lower bound estimates of lifespan in Theorem \ref{Thm=lower-lifespan} coincide with those for the single semilinear fractional heat equation \eqref{Eq-Single-Fractional} via replacing $\min\{p,q\}$ by $p$. For this reason, our results seem to be sharp.
\end{remark}

\begin{remark}
Our philosophy also can be applied to semilinear nonlocal heat exchanger systems with more general  nonlinearities $(f_1(u,v),f_2(u,v))^{\mathrm{T}}$. For example, the additive case $f_j(u,v)=u^{p_j}+v^{q_j}$ or the multiplicative case $f_j(u,v)=u^{p_j}v^{q_j}$ for $j\in\{1,2\}$ by using the general H\"older's inequality. We also believe that these approaches can treat the nonlinearities $(u^{p}\ml{M}_1(u),v^{q}\ml{M}_2(v))^{\mathrm{T}}$, where $\ml{M}_1(u)$ and $\ml{M}_2(v)$ are moduli of continuity that are weaker than any H\"older's continuity \cite{Ebert-Girardi-Reissig=2020}, e.g. $\ml{M}_j(w)=(\ln\frac{1}{w})^{-\gamma_j}$ with $\gamma_j>0$.
\end{remark}


\section{Linear nonlocal heat exchanger system}\label{Section_Linear}\setcounter{equation}{0}
\subsection{Preliminaries in the Fourier space}
\hspace{5mm}Applying the partial Fourier transform with respect to spatial variable to the Cauchy problem \eqref{Eq-Linear-Heat-Exchanger-System}, one may arrive at
\begin{align}\label{Eq-Fourier-Imagine}
\begin{cases}
\widehat{\bar{u}}_t+|\xi|^{2\sigma}\widehat{\bar{u}}+\mu\widehat{\bar{u}}-\nu\widehat{\bar{v}}=0,&\xi\in\mb{R}^n,\ t>0,\\
\widehat{\bar{v}}_t+|\xi|^{2\sigma}\widehat{\bar{v}}-\mu\widehat{\bar{u}}+\nu\widehat{\bar{v}}=0,&\xi\in\mb{R}^n,\ t>0,\\
\widehat{\bar{u}}(0,\xi)=\widehat{\bar{u}}_0(\xi),\ \widehat{\bar{v}}(0,\xi)=\widehat{\bar{v}}_0(\xi),&\xi\in\mb{R}^n.
\end{cases}
\end{align}
Let us now apply the so-called reduction method developed by Wenhui Chen in (thermo-)elasticity, firstly in the thermoelastic plate systems \cite{Chen-Ikehata=2023,Chen-Liu=2023,Aslan-Chen=2025}, later in the thermoelastic systems \cite{Chen-Takeda=2023,Chen-Liu=2025,Chen-Ikehata=2027}, in the Timoshenko systems \cite{Chen=2026,Chen=2026-Timoshenko}, and in the compressible Navier-Stokes system \cite{Chen-Ikehata=2024}.
By the reduction methodology (e.g. acting $\partial_t+|\xi|^{2\sigma}+\nu$ to \eqref{Eq-Fourier-Imagine}$_1$ and combining with \eqref{Eq-Fourier-Imagine}$_2$ directly), one notices that the unknown $\widehat{\bar{w}}\in\{\widehat{\bar{u}},\widehat{\bar{v}}\}$ fulfills the same second-order in-time differential equation with different initial data. To be specific,
\begin{align*}
\begin{cases}
\widehat{\bar{w}}_{tt}+(2|\xi|^{2\sigma}+\mu+\nu)\widehat{\bar{w}}_t+|\xi|^{2\sigma}(|\xi|^{2\sigma}+\mu+\nu)\widehat{\bar{w}}=0,&\xi\in\mb{R}^n,\ t>0,\\
\widehat{\bar{w}}(0,\xi)=\widehat{\bar{w}}_0(\xi),\ \widehat{\bar{w}}_t(0,\xi)=\widehat{\bar{w}}_1(\xi),&\xi\in\mb{R}^n,
\end{cases}
\end{align*}
where the initial data are defined by
\begin{align*}
\widehat{\bar{w}}_0(\xi):=\begin{cases}
\widehat{\bar{u}}_0(\xi)&\mbox{if}\ \ \widehat{\bar{w}}=\widehat{\bar{u}},\\
\widehat{\bar{v}}_0(\xi)&\mbox{if}\ \ \widehat{\bar{w}}=\widehat{\bar{v}},
\end{cases}\ \ \ \  \widehat{\bar{w}}_1(\xi):=\begin{cases}
-(|\xi|^{2\sigma}+\mu)\widehat{\bar{u}}_0(\xi)+\nu\widehat{\bar{v}}_0(\xi)&\mbox{if}\ \ \widehat{\bar{w}}=\widehat{\bar{u}},\\
-(|\xi|^{2\sigma}+\nu)\widehat{\bar{v}}_0(\xi)+\mu\widehat{\bar{u}}_0(\xi)&\mbox{if}\ \ \widehat{\bar{w}}=\widehat{\bar{v}}.
\end{cases}
\end{align*}
By carrying out explicit computations, the solutions are expressed via
\begin{align*}
\widehat{\bar{u}}(t,\xi)&=\mathrm{e}^{-|\xi|^{2\sigma}t}\left(\frac{\nu+\mu\,\mathrm{e}^{-(\mu+\nu)t}}{\mu+\nu}\,\widehat{\bar{u}}_0(\xi)+\frac{\nu-\nu\,\mathrm{e}^{-(\mu+\nu)t}}{\mu+\nu}\,\widehat{\bar{v}}_0(\xi)\right),\\
\widehat{\bar{v}}(t,\xi)&=\mathrm{e}^{-|\xi|^{2\sigma}t}\left(\frac{\mu-\mu\,\mathrm{e}^{-(\mu+\nu)t}}{\mu+\nu}\,\widehat{\bar{u}}_0(\xi)+\frac{\mu+\nu\,\mathrm{e}^{-(\mu+\nu)t}}{\mu+\nu}\,\widehat{\bar{v}}_0(\xi)\right),
\end{align*}
where these kernels in the operator sense are defined at the beginning of Section \ref{Section_Results}.

To end this part, let us recall the sharp $L^m$ estimates for the Fourier multiplier by using modified Bessel functions in \cite{Narazaki-Reissig=2013} and \cite[Chapter 24.2]{Ebert-Reissig=2018-book}.
\begin{lemma}\label{Lemma-Lp-sharp}
Let $1\leqslant m\leqslant +\infty$. For any $\sigma>0$ and $s\geqslant 0$, the following sharp $L^m$ estimates:
\begin{align*}
\left\|\ml{F}^{-1}_{\xi\to x}\left(|\xi|^s\,\mathrm{e}^{-|\xi|^{2\sigma}t}\right)\right\|_{L^m}\lesssim t^{-\frac{n}{2\sigma}(1-\frac{1}{m})-\frac{s}{2\sigma}}
\end{align*}
hold for any $t>0$.
\end{lemma}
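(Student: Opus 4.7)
\medskip

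\noindent\textbf{Proof proposal for Lemma \ref{Lemma-Lp-sharp}.} The plan is to reduce the estimate to the single case $t=1$ by exploiting the parabolic self-similar scaling of the symbol $|\xi|^{2\sigma}$, and then establish integrability of the resulting profile function in $L^1$ and $L^\infty$, with all intermediate exponents filled in by Riesz--Thorin interpolation.

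First I would introduce the substitution $\xi = t^{-1/(2\sigma)}\eta$ in the inverse Fourier integral. Writing
\begin{align*}
\mathcal{F}^{-1}_{\xi\to x}\bigl(|\xi|^{s}\,\mathrm{e}^{-|\xi|^{2\sigma}t}\bigr)(x)
=t^{-\frac{n+s}{2\sigma}}\,\Phi_{s,\sigma}\bigl(t^{-\frac{1}{2\sigma}}x\bigr),
\qquad
\Phi_{s,\sigma}(y):=\mathcal{F}^{-1}_{\eta\to y}\bigl(|\eta|^{s}\,\mathrm{e}^{-|\eta|^{2\sigma}}\bigr)(y),
\end{align*}
and taking $L^m$ norms yields
\begin{align*}
\bigl\|\mathcal{F}^{-1}_{\xi\to x}\bigl(|\xi|^{s}\,\mathrm{e}^{-|\xi|^{2\sigma}t}\bigr)\bigr\|_{L^m}
= t^{-\frac{n+s}{2\sigma}+\frac{n}{2\sigma m}}\,\|\Phi_{s,\sigma}\|_{L^m}
= t^{-\frac{n}{2\sigma}\left(1-\frac{1}{m}\right)-\frac{s}{2\sigma}}\,\|\Phi_{s,\sigma}\|_{L^m},
\end{align*}
so the whole lemma reduces to proving $\|\Phi_{s,\sigma}\|_{L^m}<+\infty$ uniformly in the relevant parameter ranges, for all $1\leqslant m\leqslant +\infty$.

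The case $m=+\infty$ is immediate by the Hausdorff--Young-type bound $\|\Phi_{s,\sigma}\|_{L^{\infty}}\lesssim \||\eta|^{s}\mathrm{e}^{-|\eta|^{2\sigma}}\|_{L^1}$, which is finite thanks to the polynomial growth at the origin and the super-polynomial decay at infinity of the Gaussian-like symbol. The endpoint $m=1$ is the substantive one, and here I would use the radial structure of the symbol. Passing to polar coordinates and invoking the standard identity for the Fourier transform of a radial function,
\begin{align*}
\Phi_{s,\sigma}(y)=c_n\,|y|^{-\frac{n-2}{2}}\int_0^{+\infty}\!r^{\frac{n}{2}+s}\,\mathrm{e}^{-r^{2\sigma}}\,J_{\frac{n-2}{2}}(r|y|)\,\mathrm{d}r,
\end{align*}
I would split the regions $|y|\leqslant 1$ and $|y|\geqslant 1$. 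For $|y|\leqslant 1$ the uniform boundedness of $z^{-(n-2)/2}J_{(n-2)/2}(z)$ near the origin produces the pointwise estimate $|\Phi_{s,\sigma}(y)|\lesssim 1$. For $|y|\geqslant 1$, the oscillatory asymptotics of $J_{(n-2)/2}$ together with repeated integration by parts (using that $r\mapsto r^{n/2+s}\mathrm{e}^{-r^{2\sigma}}$ is smooth on $(0,+\infty)$ and rapidly decaying at infinity, with controlled growth at the origin) yield arbitrary polynomial decay $|\Phi_{s,\sigma}(y)|\lesssim (1+|y|)^{-N}$ for any $N$. Choosing $N>n$ gives $\Phi_{s,\sigma}\in L^1(\mathbb{R}^n)$. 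These arguments are precisely the modified Bessel function computations carried out in detail in \cite{Narazaki-Reissig=2013} and in \cite[Chapter 24.2]{Ebert-Reissig=2018-book}, which I would cite rather than reproduce.

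Having the endpoints $m=1$ and $m=+\infty$, the Riesz--Thorin interpolation theorem applied to the (linear) constant multiplication by $\Phi_{s,\sigma}$ in the convolution sense, or just the trivial log-convex inequality $\|\Phi_{s,\sigma}\|_{L^m}\leqslant \|\Phi_{s,\sigma}\|_{L^1}^{1/m}\|\Phi_{s,\sigma}\|_{L^\infty}^{1-1/m}$, fills in $1<m<+\infty$ and closes the proof. The main obstacle I anticipate is the $L^1$ decay at spatial infinity: one must shepherd the integration-by-parts arguments carefully near $r=0$ because $r^{s}$ is only Hölder continuous there for non-integer $s$, but this is exactly the point at which the Bessel-function factor $r^{n/2}J_{(n-2)/2}(r|y|)$ vanishes to order $(n-1)/2$ and absorbs the mild singularity; standard cut-off splitting $r\leqslant 1/|y|$ vs.\ $r\geqslant 1/|y|$ handles the low-frequency contribution separately, as in the references cited.
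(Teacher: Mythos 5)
Your overall strategy is sound and in fact consistent with the paper, which does not prove Lemma \ref{Lemma-Lp-sharp} at all but recalls it from \cite{Narazaki-Reissig=2013} and \cite[Chapter 24.2]{Ebert-Reissig=2018-book}. Your parabolic rescaling $\xi=t^{-1/(2\sigma)}\eta$ correctly reduces the statement to the time-independent bound $\|\Phi_{s,\sigma}\|_{L^1\cap L^{\infty}}<+\infty$ (with the exact exponent $-\frac{n}{2\sigma}(1-\frac1m)-\frac{s}{2\sigma}$ falling out of the scaling), the $L^{\infty}$ endpoint via the $L^1$ norm of the symbol is fine, and the log-convexity interpolation closes the intermediate exponents; delegating the hard kernel estimate to the cited modified-Bessel-function analysis is exactly what the authors implicitly rely on.

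There is, however, one intermediate claim that is false as stated: for general $\sigma>0$ and $s\geqslant0$ the profile $\Phi_{s,\sigma}$ does \emph{not} satisfy $|\Phi_{s,\sigma}(y)|\lesssim(1+|y|)^{-N}$ for every $N$. The symbol $|\eta|^{s}\mathrm{e}^{-|\eta|^{2\sigma}}$ fails to be smooth at $\eta=0$ unless $s$ and $2\sigma$ are even integers, and this origin singularity caps the spatial decay of the inverse Fourier transform; for instance $\sigma=\frac12$, $s=0$ gives the Poisson kernel $c_n(1+|y|^2)^{-\frac{n+1}{2}}$, which decays exactly like $|y|^{-n-1}$ and no faster. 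Correspondingly, the integration by parts in your Bessel representation cannot be iterated indefinitely: each step spends a derivative of $r^{\frac n2+s}\mathrm{e}^{-r^{2\sigma}}$ near $r=0$, and only finitely many are admissible for fractional exponents, a limitation the vanishing of $J_{\frac{n-2}{2}}$ at the origin mitigates but does not remove. The correct (and sufficient) statement, which is what the cited references actually prove, is a fixed-rate decay $|\Phi_{s,\sigma}(y)|\lesssim|y|^{-n-\kappa}$ for some $\kappa>0$ determined by the first non-smooth term ($|\eta|^{s}$ or $|\eta|^{s+2\sigma}$) in the expansion of the symbol at the origin; this still yields $\Phi_{s,\sigma}\in L^1$, so your architecture stands once you replace the claim of arbitrary polynomial decay by this fixed-rate decay or cite it directly.
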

\subsection{Sharp asymptotic behavior of solutions for large time}
\hspace{5mm}For the sake of simplicity, concerning $\gamma=\nu$ if $\bar{w}=\bar{u}$; $\gamma=\mu$ if $\bar{w}=\bar{v}$, we take
\begin{align*}
\bar{w}^{\mathrm{prof}}(t,x):=\frac{\gamma}{\mu+\nu}\,\ml{G}(t,|D|)\big(\bar{u}_0(x)+\bar{v}_0(x)\big),
\end{align*}
which are the solutions to the anomalous diffusion equation $\bar{U}_t+(-\Delta)^{\sigma}\bar{U}=0$ with suitable initial data $\frac{\gamma}{\mu+\nu}(\bar{u}_0+\bar{v}_0)$. When $\sigma=1$, they are the same as those in \cite[Theorem 2.1]{Treton=2024}.

In the next result, we show that by subtracting these profiles, we gain exponentially faster decay estimates, which are non-vanishing provided that $\mu\bar{u}_0\neq\nu\bar{v}_0$. Remark that the case $\mu\bar{u}_0\equiv \nu\bar{v}_0$ will provide an equilibrium such that $\mu \bar{u}\equiv \nu \bar{v}$ for any $t>0$. This is the difference comparing with the single diffusion equation for $\bar{U}$ in the above.
\begin{prop}\label{Prop-Linear-Lp}
	Let $\bar{u}_0,\bar{v}_0\in L^m\cap L^r$ with $1\leqslant r\leqslant m\leqslant +\infty$.  The solutions $\bar{w}\in\{\bar{u},\bar{v}\}$ to the linear nonlocal heat exchanger system \eqref{Eq-Linear-Heat-Exchanger-System} satisfy the upper bound estimates
	\begin{align*}
		\|\bar{w}(t,\cdot)\|_{L^m}\lesssim (1+t)^{-\frac{n}{2\sigma}(\frac{1}{r}-\frac{1}{m})}\|(\bar{u}_0,\bar{v}_0)\|_{(L^m\cap L^r)^2},
	\end{align*}
	furthermore, by subtracting their corresponding profiles, 
	\begin{align*}
		 \left\|\bar{w}(t,\cdot)-\bar{w}^{\mathrm{prof}}(t,\cdot)\right\|_{L^m}\lesssim \mathrm{e}^{-(\mu+\nu)t}\,t^{-\frac{n}{2\sigma}(\frac{1}{r}-\frac{1}{m})}\,\|\mu \bar{u}_0-\nu \bar{v}_0\|_{L^r},
	\end{align*}
	which imply  faster decay estimates with an additional exponential factor $\mathrm{e}^{-(\mu+\nu)t}$ for large time.
\end{prop}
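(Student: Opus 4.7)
The plan is to leverage the explicit solution representations derived in the preceding subsection, together with the sharp $L^m$ mapping property of the fractional heat kernel encapsulated in Lemma \ref{Lemma-Lp-sharp}. Each solution can be written as
\begin{align*}
\bar{w}(t,x)=K_0^w(t,|D|)\bar{u}_0(x)+K_1^w(t,|D|)\bar{v}_0(x),
\end{align*}
where every operator $K_j^w$ is a uniformly bounded (in $t\geqslant 0$) scalar multiple of $\ml{G}(t,|D|)$. Coupling Young's convolution inequality with Lemma \ref{Lemma-Lp-sharp} at $s=0$ gives
\begin{align*}
\|\ml{G}(t,|D|)f\|_{L^m}\lesssim t^{-\frac{n}{2\sigma}(\frac{1}{r}-\frac{1}{m})}\|f\|_{L^r},
\end{align*}
while the fact that $\ml{G}(t,\cdot)$ is a probability kernel (it is the inverse Fourier transform of $\mathrm{e}^{-|\xi|^{2\sigma}t}$) yields $\|\ml{G}(t,|D|)f\|_{L^m}\leqslant\|f\|_{L^m}$.

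For the first (upper bound) estimate, I would split into the regimes $t\leqslant 1$ and $t>1$. On $t\leqslant 1$, the $L^m\to L^m$ boundedness produces $\|\bar{w}(t,\cdot)\|_{L^m}\lesssim\|(\bar{u}_0,\bar{v}_0)\|_{(L^m)^2}$; on $t>1$, the $L^r\to L^m$ decay gives $\|\bar{w}(t,\cdot)\|_{L^m}\lesssim t^{-\frac{n}{2\sigma}(\frac{1}{r}-\frac{1}{m})}\|(\bar{u}_0,\bar{v}_0)\|_{(L^r)^2}$. Patching the two regimes delivers the $(1+t)^{-\frac{n}{2\sigma}(\frac{1}{r}-\frac{1}{m})}$ rate with the $L^m\cap L^r$ norm on the right-hand side.

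The crucial step, which carries the main content of the statement, is the identity
\begin{align*}
\bar{w}(t,x)-\bar{w}^{\mathrm{prof}}(t,x)=\pm\,\frac{\mathrm{e}^{-(\mu+\nu)t}}{\mu+\nu}\,\ml{G}(t,|D|)\big(\mu\bar{u}_0(x)-\nu\bar{v}_0(x)\big),
\end{align*}
with the $+$ sign for $\bar{w}=\bar{u}$ and the $-$ sign for $\bar{w}=\bar{v}$. This follows at once from the direct algebraic computations
\begin{align*}
K_0^u-\frac{\nu}{\mu+\nu}\,\ml{G}=\frac{\mu\,\mathrm{e}^{-(\mu+\nu)t}}{\mu+\nu}\,\ml{G},\qquad K_1^u-\frac{\nu}{\mu+\nu}\,\ml{G}=-\frac{\nu\,\mathrm{e}^{-(\mu+\nu)t}}{\mu+\nu}\,\ml{G},
\end{align*}
and their analogues for $K_0^v,K_1^v$ against the constant $\frac{\mu}{\mu+\nu}\,\ml{G}$. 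With this single-term representation in hand, applying Young's inequality together with Lemma \ref{Lemma-Lp-sharp} directly yields the stated estimate with prefactor $\mathrm{e}^{-(\mu+\nu)t}\,t^{-\frac{n}{2\sigma}(\frac{1}{r}-\frac{1}{m})}$.

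I do not foresee any real analytic obstacle: the whole proposition is orchestrated by the cancellation identity above, which makes the exponential gain $\mathrm{e}^{-(\mu+\nu)t}$ produced by the heat-exchange coupling fall out transparently. The only minor subtlety is the small-time singularity of the factor $t^{-\frac{n}{2\sigma}(\frac{1}{r}-\frac{1}{m})}$ in the first estimate, which is cleanly resolved by the $t\leqslant 1$ versus $t>1$ dichotomy described above.
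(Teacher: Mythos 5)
Your proof is correct and follows essentially the same route as the paper's: the exact cancellation identity you derive in physical space is precisely the Fourier--space identity the paper uses for the error term $\bar{E}^w(t)$, and your small-time/large-time patching for the first estimate is the paper's ``take $r=m$ to avoid the singularity as $t\to0^+$'' step. One small caveat: for $\sigma>1$ the kernel $G(t,\cdot)$ is not nonnegative, hence not a probability kernel, so the $L^m\to L^m$ bound should instead be justified by $\|G(t,\cdot)\|_{L^1}\lesssim 1$ from Lemma \ref{Lemma-Lp-sharp} (with $m=1$, $s=0$) together with Young's inequality, which yields $\lesssim$ rather than $\leqslant$ but is all that is needed.
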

\begin{remark}
Even with $m=+\infty$ and $r=1$ when $\sigma=1$, our refined estimates are slightly different from  \cite[Theorem 2.1 and Corollary 2.2]{Treton=2024}. Particularly, we do not assume the Fourier transform of initial data $\widehat{\bar{u}}_0$, $\widehat{\bar{v}}_0$ belonging to $L^1$ space and the non-negativity of these data. Moreover, our error terms have a faster decay factor $t^{-\frac{n}{2\sigma}(\frac{1}{r}-\frac{1}{m})}$ than \cite[Inequalties (2.8)]{Treton=2024}.
\end{remark}
\begin{proof}
Let us contribute to the error estimate, precisely,
\begin{align*}
	\bar{E}^u(t)&:=\left\|\bar{u}(t,\cdot)-\bar{u}^{\mathrm{prof}}(t,\cdot)\right\|_{L^m} =\left\|\ml{F}^{-1}_{\xi\to x}\left(\mathrm{e}^{-|\xi|^{2\sigma}t-(\mu+\nu)t}\,\frac{\mu\widehat{\bar{u}}_0(\xi)-\nu\widehat{\bar{v}}_0(\xi)}{\mu+\nu}\right)\right\|_{L^m}.
\end{align*}
By using Young's convolution inequality and Lemma \ref{Lemma-Lp-sharp}, it results
\begin{align*}
	\bar{E}^u(t)\lesssim \mathrm{e}^{-(\mu+\nu)t}\,t^{-\frac{n}{2\sigma}(\frac{1}{r}-\frac{1}{m})}\,\|\mu \bar{u}_0-\nu \bar{v}_0\|_{L^r}
\end{align*}
with $1\leqslant m,r\leqslant +\infty$, similarly, 
\begin{align*}
	\bar{E}^v(t)&:=\left\|\bar{v}(t,\cdot)-\bar{v}^{\mathrm{prof}}(t,\cdot)\right\|_{L^m} \lesssim \mathrm{e}^{-(\mu+\nu)t}\,t^{-\frac{n}{2\sigma}(\frac{1}{r}-\frac{1}{m})}\,\|\mu \bar{u}_0-\nu \bar{v}_0\|_{L^r}.
\end{align*}
Employing the triangle inequality associated with Lemma \ref{Lemma-Lp-sharp} again, one derives
\begin{align}\label{Est-01}
	\|\bar{w}(t,\cdot)\|_{L^m}\lesssim \|\ml{G}(t,|D|)(\bar{u}_0+\bar{v}_0)\|_{L^m}+\bar{E}^w(t)\lesssim t^{-\frac{n}{2\sigma}(\frac{1}{r}-\frac{1}{m})}\|(\bar{u}_0,\bar{v}_0)\|_{(L^r)^2}
\end{align}
for $\bar{w}\in\{\bar{u},\bar{v}\}$ and $1\leqslant m,r\leqslant +\infty$.
Finally, taking $r=m$ to avoid the singularity as $t\to0^+$ in \eqref{Est-01}, we conclude our desired $(L^m\cap L^r)-L^m$ estimates.
\end{proof}

We are going to consider optimal large time estimates by rigorously verifying their lower bounds in the $L^2$ framework with $L^1$ data.
\begin{prop}\label{Prop-Linear-Optimal-L2}
Let $\bar{u}_0,\bar{v}_0\in L^1$ such that $P_{\bar{u}_0+\bar{v}_0}\neq0$. The solutions $\bar{w}\in\{\bar{u},\bar{v}\}$ to the linear nonlocal heat exchanger system \eqref{Eq-Linear-Heat-Exchanger-System} satisfy the optimal estimates
\begin{align*}
t^{-\frac{n}{4\sigma}}|P_{\bar{u}_0+\bar{v}_0}|\lesssim  \|\bar{w}(t,\cdot)\|_{L^2}\lesssim t^{-\frac{n}{4\sigma}}\|(\bar{u}_0,\bar{v}_0)\|_{(L^1)^2}
\end{align*}
as large time $t\gg1$.
\end{prop}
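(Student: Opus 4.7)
The upper bound is immediate from Proposition \ref{Prop-Linear-Lp} applied with $m=2$ and $r=1$, since $(1+t)^{-\frac{n}{4\sigma}}\approx t^{-\frac{n}{4\sigma}}$ for $t\gg 1$. The heart of the matter is therefore the matching lower bound, and the natural route is to exploit the decomposition $\bar{w}=\bar{w}^{\mathrm{prof}}+(\bar{w}-\bar{w}^{\mathrm{prof}})$ already isolated in Proposition \ref{Prop-Linear-Lp}.

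The plan is to invoke the reverse triangle inequality
$$\|\bar{w}(t,\cdot)\|_{L^2}\;\geqslant\;\|\bar{w}^{\mathrm{prof}}(t,\cdot)\|_{L^2}-\|\bar{w}(t,\cdot)-\bar{w}^{\mathrm{prof}}(t,\cdot)\|_{L^2}.$$
The error term on the right carries the exponential factor $\mathrm{e}^{-(\mu+\nu)t}$ by Proposition \ref{Prop-Linear-Lp} with $m=2$, $r=1$, so it is dominated by any polynomial-rate quantity once $t$ is large enough. Hence the question reduces to establishing $\|\bar{w}^{\mathrm{prof}}(t,\cdot)\|_{L^2}\gtrsim t^{-\frac{n}{4\sigma}}|P_{\bar{u}_0+\bar{v}_0}|$ for $t\gg 1$.

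Because $\bar{w}^{\mathrm{prof}}=\frac{\gamma}{\mu+\nu}\mathcal{G}(t,|D|)(\bar{u}_0+\bar{v}_0)$ with $\gamma\in\{\mu,\nu\}$, Plancherel's identity yields
$$\|\bar{w}^{\mathrm{prof}}(t,\cdot)\|_{L^2}^{2}=\frac{\gamma^{2}}{(\mu+\nu)^{2}}\int_{\mathbb{R}^{n}}\mathrm{e}^{-2|\xi|^{2\sigma}t}\,|\widehat{\bar{u}}_0(\xi)+\widehat{\bar{v}}_0(\xi)|^{2}\,\mathrm{d}\xi.$$
Since $\bar{u}_0+\bar{v}_0\in L^{1}$, its Fourier transform is continuous with nonzero value $P_{\bar{u}_0+\bar{v}_0}$ at the origin; so I can fix $R>0$ such that $|\widehat{\bar{u}}_0(\xi)+\widehat{\bar{v}}_0(\xi)|\geqslant\tfrac{1}{2}|P_{\bar{u}_0+\bar{v}_0}|$ on $\{|\xi|\leqslant R\}$. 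Restricting the integration to this ball and rescaling $\eta=t^{1/(2\sigma)}\xi$ converts the integral into $t^{-n/(2\sigma)}\int_{|\eta|\leqslant Rt^{1/(2\sigma)}}\mathrm{e}^{-2|\eta|^{2\sigma}}\,\mathrm{d}\eta$, and the latter integral is bounded below by a positive constant depending only on $\sigma$ and $n$ as soon as $t$ is large enough. Extracting the square root produces the required $t^{-n/(4\sigma)}|P_{\bar{u}_0+\bar{v}_0}|$ lower bound.

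Assembling the two bounds concludes the proof; the only genuine obstacle is the low-frequency argument for the profile, where the continuity of $\widehat{\bar{u}}_0+\widehat{\bar{v}}_0$ (furnished by the $L^{1}$ assumption) together with the scaling symmetry of $\mathrm{e}^{-|\xi|^{2\sigma}t}$ is precisely what permits one to localize near $\xi=0$ without losing the sharp rate. The exponential smallness of the error, along with the polynomial upper bound from Proposition \ref{Prop-Linear-Lp}, then sandwiches $\|\bar{w}(t,\cdot)\|_{L^{2}}$ between two constant multiples of $t^{-n/(4\sigma)}|P_{\bar{u}_0+\bar{v}_0}|$ and $t^{-n/(4\sigma)}\|(\bar{u}_0,\bar{v}_0)\|_{(L^{1})^{2}}$, respectively.
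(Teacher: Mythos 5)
Your argument is correct, and the overall skeleton (upper bound from Proposition \ref{Prop-Linear-Lp} with $m=2$, $r=1$; lower bound by peeling off the exponentially decaying difference $\bar{w}-\bar{w}^{\mathrm{prof}}$ via the reverse triangle inequality) matches the paper's. Where you genuinely diverge is in the key step of bounding the profile from below. The paper first replaces $\ml{G}(t,|D|)\bar{g}_0$ by $G(t,\cdot)P_{\bar{g}_0}$ in \emph{physical} space, using the mean value theorem on $G(t,x-y)-G(t,x)$ together with a splitting of the $y$-integral at $|y|=t^{\frac{1}{4\sigma}}$ to show the replacement error is $o(t^{-\frac{n}{4\sigma}})$, and then evaluates $\|G(t,\cdot)\|_{L^2}\approx t^{-\frac{n}{4\sigma}}$ by Plancherel and polar coordinates. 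You instead stay entirely on the Fourier side: Plancherel reduces the profile norm to $\int \mathrm{e}^{-2|\xi|^{2\sigma}t}|\widehat{\bar{u}}_0(\xi)+\widehat{\bar{v}}_0(\xi)|^2\,\mathrm{d}\xi$, and the continuity of the Fourier transform of an $L^1$ function at the origin (with value $P_{\bar{u}_0+\bar{v}_0}\neq 0$) lets you localize to a small ball and rescale. Your route is more elementary for the purpose of this proposition --- it avoids the physical-space tail argument of estimate \eqref{Est-05} entirely --- but it yields only the lower bound on the norm, whereas the paper's replacement $\ml{G}(t,|D|)\bar{g}_0\to G(t,\cdot)P_{\bar{g}_0}$ is the stronger statement that the authors reuse verbatim to prove Proposition \ref{Prop-Lm-profile} and the asymptotic profiles of Theorem \ref{Thm=Large-time}, where an explicit spatial profile (not merely a norm bound) is required. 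Both arguments are complete; just note that Plancherel is legitimately applied in your setting because $\ml{G}(t,|D|)(\bar{u}_0+\bar{v}_0)\in L^2$ by Young's inequality even though the data are only in $L^1$.
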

\begin{proof}
The upper bound has been proved by \eqref{Est-01} with $m=2$ as well as $r=1$ already. Let us consider its lower bound only. With the aid of mean value theorem
\begin{align*}
|G(t,x-y)-G(t,x)|\lesssim|y|\,|\nabla G(t,x-\theta_0y)|\ \ \mbox{with}\ \ \theta_0\in(0,1),
\end{align*}
we may separate the integral into two parts such that
\begin{align}
&\|\ml{G}(t,|D|)\bar{g}_0(\cdot)-G(t,\cdot)P_{\bar{g}_0}\|_{L^2}\notag\\
&\lesssim\left\|\int_{|y|\leqslant t^{\frac{1}{4\sigma}}}[G(t,\cdot-y)-G(t,\cdot)]\,\bar{g}_0(y)\,\mathrm{d}y\right\|_{L^2}+\left\|\int_{|y|\geqslant t^{\frac{1}{4\sigma}}}[\,|G(t,\cdot-y)|+|G(t,\cdot)|\,]\,|\bar{g}_0(y)|\,\mathrm{d}y\right\|_{L^2}\notag\\
&\lesssim t^{\frac{1}{4\sigma}}\|\,|\xi|\widehat{\ml{G}}(t,|\xi|)\|_{L^2}\|\bar{g}_0\|_{L^1}+\|\widehat{\ml{G}}(t,|\xi|)\|_{L^2}\|\bar{g}_0\|_{L^1(|x|\geqslant t^{\frac{1}{4\sigma}})}\notag\\
&\lesssim t^{-\frac{n}{4\sigma}}\left(t^{-\frac{1}{4\sigma}}\|\bar{g}_0\|_{L^1}+o(1)\right)=o(t^{-\frac{n}{4\sigma}})\label{Est-05}
\end{align}
as large time $t\gg1$, where we used the integrability of $\bar{g}_0(x):=\frac{\mu}{\mu+\nu}(\bar{u}_0(x)+\bar{v}_0(x))$ or $\frac{\nu}{\mu+\nu}(\bar{u}_0(x)+\bar{v}_0(x))$ due to $\bar{u}_0,\bar{v}_0\in L^1$. By the polar coordinates and the Plancherel identity, it holds that
\begin{align}
\|G(t,\cdot)\|_{L^2}^2&=\left\|\mathrm{e}^{-|\xi|^{2\sigma}t}\right\|_{L^2}^2=|\mathbb{S}^{n-1}|\int_0^{+\infty}\mathrm{e}^{-2r^{2\sigma}t}\,r^{n-1}\,\mathrm{d}r\notag\\
&\approx t^{-\frac{n}{2\sigma}}\int_0^{+\infty}\mathrm{e}^{-2\eta^{2\sigma}}\eta^{n-1}\,\mathrm{d}\eta\approx t^{-\frac{n}{2\sigma}}\label{Est-06}
\end{align}
as large time $t\gg1$. Considering $\bar{w}\in\{\bar{u},\bar{v}\}$, the triangle inequality shows
\begin{align*}
\|\bar{w}(t,\cdot)\|_{L^2}&\gtrsim\|G(t,\cdot)\|_{L^2}|P_{\bar{g}_0}|-\|\bar{w}(t,\cdot)-\ml{G}(t,|D|)\bar{g}_0(\cdot)\|_{L^2}-\|\ml{G}(t,|D|)\bar{g}_0(\cdot)-G(t,\cdot)P_{\bar{g}_0}\|_{L^2}\\
&\gtrsim t^{-\frac{n}{4\sigma}}|P_{\bar{g}_0}|-\mathrm{e}^{-(\mu+\nu)t}\,t^{-\frac{n}{4\sigma}}\|\mu\bar{u}_0-\nu\bar{v}_0\|_{L^1}-o(t^{-\frac{n}{4\sigma}})
\end{align*}
as large time $t\gg1$, provided that $P_{\bar{g}_0}\neq 0$. Thanks to $|P_{\bar{g}_0}|\gtrsim |P_{\bar{u}_0+\bar{v}_0}|$, our proof is complete.
\end{proof}

However, the profiles $\bar{u}^{\mathrm{prof}}$, $\bar{v}^{\mathrm{prof}}$ cannot be devoted to the semilinear problem \eqref{Eq-Semi-Nonlocal-Heat-System}. For this reason, we are going to introduce another kind of large time profiles in the sense of integral of initial data. By the same way as \eqref{Est-05} and using Lemma \ref{Lemma-Lp-sharp}, one may easily get the next large time behavior for $1\leqslant m\leqslant +\infty$:
\begin{align*}
\|\ml{G}(t,|D|)\bar{g}_0(\cdot)-G(t,\cdot)P_{\bar{g}_0}\|_{L^m}=o(t^{-\frac{n}{2\sigma}(1-\frac{1}{m})}).
\end{align*}
Then, the next result for their large time asymptotic profiles in the $L^m$ framework can be deduced.
\begin{prop}\label{Prop-Lm-profile}
Let $\bar{u}_0,\bar{v}_0\in L^1$. The solutions $\bar{w}\in\{\bar{u},\bar{v}\}$ to the linear nonlocal heat exchanger system \eqref{Eq-Linear-Heat-Exchanger-System} satisfy the refined decay estimates
\begin{align*}
\lim\limits_{t\to+\infty}t^{\frac{n}{2\sigma}(1-\frac{1}{m})}\left\|\bar{w}(t,\cdot)-\frac{\gamma}{\mu+\nu}\,G(t,\cdot)P_{\bar{u}_0+\bar{v}_0}\right\|_{L^m}=0
\end{align*}
for any $1\leqslant m\leqslant +\infty$, where $\gamma=\nu$ if $\bar{w}=\bar{u}$; $\gamma=\mu$ if $\bar{w}=\bar{v}$.
\end{prop}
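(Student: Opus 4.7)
The plan is to decompose the desired error into a profile-difference piece (handled by Proposition \ref{Prop-Linear-Lp}) and a kernel-averaging piece (handled by an $L^m$ analogue of the mean-value argument from Proposition \ref{Prop-Linear-Optimal-L2}). Set $\bar{g}_0(x):=\frac{\gamma}{\mu+\nu}(\bar{u}_0(x)+\bar{v}_0(x))$, so that $\bar{w}^{\mathrm{prof}}(t,\cdot)=\ml{G}(t,|D|)\bar{g}_0$ by construction, and note $P_{\bar{g}_0}=\frac{\gamma}{\mu+\nu}P_{\bar{u}_0+\bar{v}_0}$. Then
\[
\bar{w}(t,\cdot)-\frac{\gamma}{\mu+\nu}G(t,\cdot)P_{\bar{u}_0+\bar{v}_0}
=\bigl(\bar{w}(t,\cdot)-\bar{w}^{\mathrm{prof}}(t,\cdot)\bigr)+\bigl(\ml{G}(t,|D|)\bar{g}_0-G(t,\cdot)P_{\bar{g}_0}\bigr),
\]
and it suffices to prove that each summand is $o(t^{-\frac{n}{2\sigma}(1-\frac{1}{m})})$ in $L^m$ as $t\to+\infty$.

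For the first summand, Proposition \ref{Prop-Linear-Lp} applied with $r=1$ directly yields
\[
\|\bar{w}(t,\cdot)-\bar{w}^{\mathrm{prof}}(t,\cdot)\|_{L^m}\lesssim \mathrm{e}^{-(\mu+\nu)t}\,t^{-\frac{n}{2\sigma}(1-\frac{1}{m})}\|\mu\bar{u}_0-\nu\bar{v}_0\|_{L^1},
\]
so multiplying by $t^{\frac{n}{2\sigma}(1-\frac{1}{m})}$ the exponential prefactor delivers the required $o(1)$.

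For the second summand, I would repeat the mean-value-theorem decomposition that appears in the proof of Proposition \ref{Prop-Linear-Optimal-L2}, but transport it from $L^2$ to $L^m$. Writing
\[
\ml{G}(t,|D|)\bar{g}_0(x)-G(t,x)P_{\bar{g}_0}=\int_{\mb{R}^n}\bigl(G(t,x-y)-G(t,x)\bigr)\bar{g}_0(y)\,\mathrm{d}y,
\]
I split the integration set into $|y|\leqslant t^{\frac{1}{4\sigma}}$ and $|y|>t^{\frac{1}{4\sigma}}$ and apply Minkowski's integral inequality. Using the pointwise estimate $|G(t,x-y)-G(t,x)|\lesssim |y|\int_0^1|\nabla G(t,x-\theta y)|\,\mathrm{d}\theta$ together with Lemma \ref{Lemma-Lp-sharp} at $s=1$, the inner part is dominated by
\[
t^{\frac{1}{4\sigma}}\,\|\nabla G(t,\cdot)\|_{L^m}\|\bar{g}_0\|_{L^1}\lesssim t^{-\frac{1}{4\sigma}}\,t^{-\frac{n}{2\sigma}(1-\frac{1}{m})}\|\bar{g}_0\|_{L^1},
\]
while the triangle inequality together with Lemma \ref{Lemma-Lp-sharp} at $s=0$ bounds the outer part by $\|G(t,\cdot)\|_{L^m}\|\bar{g}_0\|_{L^1(|y|>t^{1/(4\sigma)})}\lesssim t^{-\frac{n}{2\sigma}(1-\frac{1}{m})}\cdot o(1)$, since $\bar{g}_0\in L^1$ forces its tails to vanish. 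Summing the two contributions yields the claimed rate.

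The argument is essentially bookkeeping: the Plancherel-based bounds for $G$ and $\nabla G$ used in Proposition \ref{Prop-Linear-Optimal-L2} are replaced by their $L^m$ counterparts from Lemma \ref{Lemma-Lp-sharp}, which are available uniformly for $1\leqslant m\leqslant +\infty$. The only mildly delicate point is that the cut-off radius $t^{\frac{1}{4\sigma}}$ must simultaneously absorb the $t^{-\frac{1}{2\sigma}}$ factor from $\|\nabla G(t,\cdot)\|_{L^m}$ and let the $L^1$-tail of $\bar{g}_0$ decay, and this particular choice achieves the balance for every $m$.
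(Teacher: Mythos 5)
Your proposal is correct and follows essentially the same route as the paper: the triangle-inequality split into the exponentially decaying difference $\bar{w}-\bar{w}^{\mathrm{prof}}$ (Proposition \ref{Prop-Linear-Lp} with $r=1$) plus the term $\ml{G}(t,|D|)\bar{g}_0-G(t,\cdot)P_{\bar{g}_0}$, which the paper likewise handles by transporting the mean-value/cut-off argument of \eqref{Est-05} to $L^m$ via Lemma \ref{Lemma-Lp-sharp}. No gaps.
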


\section{Semilinear nonlocal heat exchanger system}\label{Section_Existence}\setcounter{equation}{0}
\subsection{Philosophy of our proofs}\label{Philosophy-Global}
\hspace{5mm}Before proving the local/global in-time existence results and the sharp lower bound estimates of lifespan $T_{\varepsilon}$, as preparations, we at first explain our philosophy.

For any $T>0$, we introduce the evolution space of solution $\ml{U}=\ml{U}(t,x)$ in the vector sense that $\ml{U}:=(u,v)^{\mathrm{T}}$ by
\begin{align*}
X_T:=\big(\ml{C}([0,T],L^1\cap L^{\infty})\times \ml{C}([0,T],L^1\cap L^{\infty})\big)^{\mathrm{T}}
\end{align*}
equipping the time-weighted norm
\begin{align*}
\|\ml{U}\|_{X_T}:=\sup\limits_{t\in[0,T]}\sum\limits_{w\in\{u,v\}}\left(\|w(t,\cdot)\|_{L^1}+(1+t)^{\frac{n}{2\sigma}}\|w(t,\cdot)\|_{L^{\infty}}\right).
\end{align*}
From Duhamel's principle, concerning the semilinear Cauchy problem \eqref{Eq-Semi-Nonlocal-Heat-System} in the vector version, let us construct the following nonlinear integral operator:
\begin{align*}
\ml{N}:\ \ml{U}\in X_T\to \ml{N}[\ml{U}]:=\varepsilon \ml{U}_{\lin}+\ml{U}_{\nlin}
\end{align*}
for any $t\in[0,T]$ and $x\in\mb{R}^n$, with $\ml{U}_{\lin}=(\bar{u},\bar{v})^{\mathrm{T}}$ and $\ml{U}_{\nlin}=(u^{\nlin},v^{\nlin})^{\mathrm{T}}$ defined by
\begin{align*}
u^{\nlin}(t,x)&:=\int_0^t\big(K_0^u(t-\tau,|D|)[u(\tau,x)]^p+K_1^u(t-\tau,|D|)[v(\tau,x)]^q\big)\,\mathrm{d}\tau,\\
v^{\nlin}(t,x)&:=\int_0^t\big(K_0^v(t-\tau,|D|)[u(\tau,\cdot)]^p+K_1^v(t-\tau,|D|)[v(\tau,x)]^q\big)\,\mathrm{d}\tau.
\end{align*}

The assumption $(u_0,v_0)\in (L^{\infty}\cap L^1)^2$ indicates $\ml{U}_{\lin}\in X_{T}$ for any $T>0$ and the uniform estimate (i.e. Proposition \ref{Prop-Linear-Lp} with $m=1$ or $m=+\infty$ and $r=1$)
\begin{align*}
\|\ml{U}_{\lin}\|_{X_T}\leqslant C_0\|(u_0,v_0)\|_{(L^{\infty}\cap L^1)^2}
\end{align*}
for a suitable constant $C_0>0$.

Our goal is to prove the existence of unique fixed point $\ml{U}$ of nonlinear integral operator $\ml{N}$ in the space $X_T$, which is equivalent to
 the unique solutions $(u,v)$ to the semilinear problem \eqref{Eq-Semi-Nonlocal-Heat-System} in $X_T$. So, we are going to apply the Banach contraction principle for any $\ml{U}$ and $\ml{V}$ in the set
\begin{align*}
\ml{B}_{\kappa}(X_T):=\big\{\ml{U}\in X_T:\ \|\ml{U}\|_{X_T}\leqslant \kappa:=2\varepsilon C_0\|(u_0,v_0)\|_{(L^{\infty}\cap L^1)^2}>0\big\}.
\end{align*}
In other words, we will prove that the following fundamental inequalities:
\begin{align}
\|\ml{N}[\ml{U}]\|_{X_T}&\leqslant \varepsilon C_0\|(u_0,v_0)\|_{(L^{\infty}\cap L^1)^2}+C_1(T)\|\ml{U}\|_{X_T}^{\min\{p,q\}},\label{Est-Important-01}\\
\|\ml{N}[\ml{U}]-\ml{N}[\ml{V}]\|_{X_T}&\leqslant C_1(T)\|\ml{U}-\ml{V}\|_{X_T}\big(\|\ml{U}\|_{X_T}^{\min\{p,q\}-1}+\|\ml{V}\|_{X_T}^{\min\{p,q\}-1}\big),\label{Est-Important-02}
\end{align}
hold with a suitable constant $C_1(T)>0$.

In the forthcoming subsection, we will demonstrate the crucial estimate
\begin{align}\label{Est-Important-03}
C_1(T)\lesssim \sum\limits_{r\in\{p,q\}}\left(\int_0^T(1+\tau)^{-\frac{n}{2\sigma}(r-1)}\,\mathrm{d}\tau+(1+T)^{1-\frac{n}{2\sigma}(r-1)}\right).
\end{align}
We next separate our discussion according to the value of $\min\{p,q\}$, motivated by the recent work \cite{Chen-Girardi=2025}.
\begin{description}
\item[Super-critcial Case: $\min\{p,q\}>p_{\mathrm{Fuj}}(\frac{n}{\sigma})$.] It is trivial that
\begin{align*}
\int_0^{T}(1+\tau)^{-\frac{n}{2\sigma}(r-1)}\,\mathrm{d}\tau\lesssim 1\ \ \mbox{and}\ \ (1+T)^{1-\frac{n}{2\sigma}(r-1)}\lesssim 1,
\end{align*}
for $r\in\{p,q\}$, uniformly in-time $T$, namely, $C_1(T)\leqslant C_2$ for any $T>0$, where $C_2>0$ is uniformly bounded with respect to $T$. Carrying $T=+\infty$, the estimates \eqref{Est-Important-01} and \eqref{Est-Important-02} deduce, respectively,
\begin{align*}
\|\ml{N}[\ml{U}]\|_{X_{+\infty}}\leqslant\frac{3}{4}\kappa\ \  \mbox{and}\ \
\|\ml{N}[\ml{U}]-\ml{N}[\ml{V}]\|_{X_{+\infty}}\leqslant\frac{1}{2}\|\ml{U}-\ml{V}\|_{X_{+\infty}},
\end{align*}
if $4C_2\kappa^{\min\{p,q\}-1}\leqslant 1$. That is to say, the choice of small size $\varepsilon$ leads to
\begin{align*}
0<\varepsilon\leqslant \varepsilon_0:=(4C_2)^{-\frac{1}{\min\{p,q\}-1}}\left( 2C_0\|(u_0,v_0)\|_{(L^{\infty}\cap L^1)^2}\right)^{-1}.
\end{align*}
It follows $(u,v)^{\mathrm{T}}\in\ml{B}_{\kappa}(X_{+\infty})$. As a byproduct, we also find
\begin{align*}
\sup\limits_{t\in[0,+\infty)}\sum\limits_{w\in\{ u,v\}}\left(\|w(t,\cdot)\|_{L^1}+(1+t)^{\frac{n}{2\sigma}}\|w(t,\cdot)\|_{L^{\infty}}\right)\lesssim \varepsilon\|(u_0,v_0)\|_{(L^{\infty}\cap L^1)^2}.
\end{align*}
Finally, the interpolation completes the desired estimates \eqref{Est-03}.

\item[Sub-critcial Case: $\min\{p,q\}< p_{\mathrm{Fuj}}(\frac{n}{\sigma})$.] We may get
\begin{align*}
C_1(T)\leqslant C_3 (1+T)^{1-\frac{n}{2\sigma}(\min\{p,q\}-1)},
\end{align*}
with a uniformly in-time suitable constant $C_3>0$. Similarly to the super-critical case, the operator $\ml{N}$ is a  contraction on the ball $\ml{B}_{\kappa}(X_T)$ for some $T>0$ satisfying
\begin{align*}
4C_3(1+T)^{1-\frac{n}{2\sigma}(\min\{p,q\}-1)}\kappa^{\min\{p,q\}-1}\leqslant 1,
\end{align*}
namely,
\begin{align}\label{Est-02}
T\lesssim \varepsilon^{-\frac{\min\{p,q\}-1}{1-\frac{n}{2\sigma}(\min\{p,q\}-1)}}.
\end{align}
Thus, for any $T>0$ such that \eqref{Est-02} holds, a direct application of Banach fixed point argument leads to the existence of uniquely local in-time solutions $(u,v)\in\ml{B}_{\kappa}(X_T)$. The lower bound estimate of lifespan $T_{\varepsilon}$ in the sub-critical case has been derived.
\item[Critcial Case: $\min\{p,q\}= p_{\mathrm{Fuj}}(\frac{n}{\sigma})$.] For a uniformly in-time suitable constant $C_4>0$, it follows
\begin{align*}
	C_1(T)\leqslant C_4 \ln (1+T).
\end{align*}
From the needed condition (to ensure the local in-time existence of solutions)
\begin{align*}
4C_4\ln (1+T)\kappa^{\min\{p,q\}-1}\leqslant 1,
\end{align*}
we analogously to the above sub-critical case arrive at
\begin{align*}
T\lesssim \exp\left(C\varepsilon^{-\min\{p,q\}+1}\right)=\exp\left(C\varepsilon^{1-p_{\mathrm{Fuj}}(\frac{n}{\sigma})}\right),
\end{align*}
which shows our desired lower bound estimate of lifespan $T_{\varepsilon}$ in the critical case.
\end{description}
All in all,  to complete Theorem \ref{Thm=Global-Solution} and Theorem \ref{Thm=lower-lifespan}, it remains to justify the crucial estimate \eqref{Est-Important-03}.
\subsection{Proofs of Theorem \ref{Thm=Global-Solution} and Theorem \ref{Thm=lower-lifespan}}
\hspace{5mm}Recalling the definition of $X_T$, by the Riesz-Thorin interpolation between the $L^1$ and $L^{\infty}$ norms, we are able to conclude
\begin{align*}
\|\,[u(\tau,\cdot)]^p\|_{L^m}\lesssim (1+\tau)^{-\frac{n}{2\sigma}(1-\frac{1}{mp})p}\|\ml{U}\|_{X_T}^p,\\
\|\,[v(\tau,\cdot)]^q\|_{L^m}\lesssim (1+\tau)^{-\frac{n}{2\sigma}(1-\frac{1}{mq})q}\|\ml{U}\|_{X_T}^q,
\end{align*}
for any $1\leqslant m\leqslant +\infty$. Concerning $w\in\{u,v\}$, by using the derived (bounded) $L^1-L^1$ estimate from Proposition \ref{Prop-Linear-Lp}, one obtains
\begin{align*}
\|w^{\nlin}(t,\cdot)\|_{L^1}&\lesssim\int_0^t\big(\|\,[u(\tau,\cdot)]^p\|_{L^1}+\|\,[v(\tau,\cdot)]^q\|_{L^1}\big)\,\mathrm{d}\tau\\
&\lesssim\int_0^t(1+\tau)^{-\frac{n}{2\sigma}(p-1)}\,\mathrm{d}\tau\,\|\ml{U}\|_{X_T}^p+\int_0^t(1+\tau)^{-\frac{n}{2\sigma}(q-1)}\,\mathrm{d}\tau\,\|\ml{U}\|_{X_T}^q\\
&\lesssim\sum\limits_{r\in\{ p,q\}}\int_0^t(1+\tau)^{-\frac{n}{2\sigma}(r-1)}\,\mathrm{d}\tau\,\|\ml{U}\|_{X_T}^{\min\{p,q\}},
\end{align*}
thanks to the smallness condition in $\ml{B}_{\kappa}(X_T)$. Next, with the aid of derived $(L^{\infty}\cap L^1)-L^{\infty}$ estimate in $[0,\frac{t}{2}]$ and $L^{\infty}-L^{\infty}$ estimate in $[\frac{t}{2},t]$ from Proposition \ref{Prop-Linear-Lp}, one gets
\begin{align*}
(1+t)^{\frac{n}{2\sigma}}\|w^{\nlin}(t,\cdot)\|_{L^{\infty}}&\lesssim(1+t)^{\frac{n}{2\sigma}}\int_0^{\frac{t}{2}}(1+t-\tau)^{-\frac{n}{2\sigma}}\big(\|\,[u(\tau,\cdot)]^p\|_{L^{\infty}\cap L^1}+\|\,[v(\tau,\cdot)]^q\|_{L^{\infty}\cap L^1}\big)\,\mathrm{d}\tau\\
&\quad+(1+t)^{\frac{n}{2\sigma}}\int_{\frac{t}{2}}^t\big(\|\,[u(\tau,\cdot)]^p\|_{L^{\infty}}+\|\,[v(\tau,\cdot)]^q\|_{L^{\infty}}\big)\,\mathrm{d}\tau\\
&\lesssim \sum\limits_{r\in\{p,q\}}\left(\int_0^{\frac{t}{2}}(1+\tau)^{-\frac{n}{2\sigma}(r-1)}\,\mathrm{d}\tau+(1+t)^{\frac{n}{2\sigma}}\int_{\frac{t}{2}}^t(1+\tau)^{-\frac{nr}{2\sigma}}\,\mathrm{d}\tau\right)\|\ml{U}\|_{X_T}^{\min\{p,q\}}\\
&\lesssim \sum\limits_{r\in\{ p,q\}}\left(\int_0^t(1+\tau)^{-\frac{n}{2\sigma}(r-1)}\,\mathrm{d}\tau+(1+t)^{1-\frac{n}{2\sigma}(r-1)}\right)\|\ml{U}\|_{X_T}^{\min\{p,q\}},
\end{align*}
where we employed the asymptotic relations $1+t-\tau\approx 1+t$ for $\tau\in[0,\frac{t}{2}]$ and $1+\tau\approx1+t$ for $\tau\in[\frac{t}{2},t]$. In other words, the summary of them shows
\begin{align*}
\|\ml{U}^{\nlin}\|_{X_T}\lesssim \sum\limits_{r\in\{ p,q\}}\left(\int_0^T(1+\tau)^{-\frac{n}{2\sigma}(r-1)}\,\mathrm{d}\tau+(1+T)^{1-\frac{n}{2\sigma}(r-1)}\right)\|\ml{U}\|_{X_T}^{\min\{p,q\}}.
\end{align*}
By the analogous way, it leads to
\begin{align*}
\|\ml{N}[\ml{U}]-\ml{N}[\ml{V}]\|_{X_T}&\lesssim \sum\limits_{r\in\{ p,q\}}\left(\int_0^T(1+\tau)^{-\frac{n}{2\sigma}(r-1)}\,\mathrm{d}\tau+(1+T)^{1-\frac{n}{2\sigma}(r-1)}\right)\\
&\quad\times\|\ml{U}-\ml{V}\|_{X_T}\big(\|\ml{U}\|_{X_T}^{\min\{p,q\}-1}+\|\ml{V}\|_{X_T}^{\min\{p,q\}-1}\big).
\end{align*}
All in all, our desired estimate \eqref{Est-Important-03} is obtained.
\subsection{Proof of Theorem \ref{Thm=Large-time}}
\hspace{5mm}Let us see that the global in-time solutions demonstrated in Theorem \ref{Thm=Global-Solution} have the integral forms \eqref{Equa-01} for $w\in\{u,v\}$. Recalling the refined estimates for the linear problem in Proposition \ref{Prop-Lm-profile}, in order to justify our desired error estimates, we have to consider each kernel in the nonlinear parts motivated by \cite{Ikehata-Takeda=2017}. For the sake of readability, we are going to demonstrate the first part, namely, the refined estimate for $\int_0^tK_0^u(t-\tau,|D|)[u(\tau,x)]^p\,\mathrm{d}\tau$. The other three parts can be proved similarly. In the following discussions, we consider large time $t\gg1$ without more repetition. 

First of all, let us carry out a suitable decomposition (into five parts)
\begin{align*}
\int_0^tK_0^u(t-\tau,|D|)[u(\tau,x)]^p\,\mathrm{d}\tau-\frac{\nu}{\mu+\nu}\,G(t,x)\,\ml{P}_{u^p}=\sum\limits_{j\in\{1,\dots,5\}}A_{j}(t,x),
\end{align*}
where we took
\begin{align*}
A_1(t,x)&:=\int_0^{\frac{t}{2}}\left(K_0^u(t-\tau,|D|)-\frac{\nu}{\mu+\nu}\,\ml{G}(t-\tau,|D|)\right)[u(\tau,x)]^p\,\mathrm{d}\tau,\\
A_2(t,x)&:=\frac{\nu}{\mu+\nu}\int_0^{\frac{t}{2}}\big(\ml{G}(t-\tau,|D|)-\ml{G}(t,|D|)\big)[u(\tau,x)]^p\,\mathrm{d}\tau,\\
A_3(t,x)&:=\frac{\nu}{\mu+\nu}\int_0^{\frac{t}{2}}\left(\ml{G}(t,|D|)[u(\tau,x)]^p-\int_{\mb{R}^n}[u(\tau,y)]^p\,\mathrm{d}y\,G(t,x)\right)\mathrm{d}\tau,\\
A_4(t,x)&:=\int_{\frac{t}{2}}^tK_0^u(t-\tau,|D|)[u(\tau,x)]^p\,\mathrm{d}\tau,\\
A_5(t,x)&:=-\frac{\nu}{\mu+\nu}\,G(t,x)\int_{\frac{t}{2}}^{+\infty}\int_{\mb{R}^n}[u(\tau,y)]^p\,\mathrm{d}y\,\mathrm{d}\tau.
\end{align*}
\begin{itemize}
	\item According to Proposition \ref{Prop-Linear-Lp} (with $m=m_1$ as well as $r=1$) and the decay estimate \eqref{Est-03} of global in-time solution $u$, the first term can be estimated by
	\begin{align*}
		t^{\frac{n}{2\sigma}(1-\frac{1}{m_1})}\|A_1(t,\cdot)\|_{L^{m_1}}&\lesssim t^{\frac{n}{2\sigma}(1-\frac{1}{m_1})}\int_0^{\frac{t}{2}}\mathrm{e}^{-(\mu+\nu)(t-\tau)}\,(t-\tau)^{-\frac{n}{2\sigma}(1-\frac{1}{m_1})}\|\,[u(\tau,\cdot)]^p\|_{L^1}\,\mathrm{d}\tau\\
		&\lesssim \varepsilon^p\,\mathrm{e}^{-ct}\,\|(u_0,v_0)\|_{(L^{\infty}\cap L^1)^2}^p.
	\end{align*}
	\item By using the mean value theorem with respect to $t$, i.e.
	\begin{align*}
		\ml{G}(t-\tau,|D|)-\ml{G}(t,|D|)=-\tau\ml{G}_t(t-\theta_1\tau,|D|)\ \ \mbox{with}\ \ \theta_1\in(0,1),
	\end{align*}
	and Lemma \ref{Lemma-Lp-sharp} (with $m=m_1$ as well as $s=2\sigma$), the second term can be estimated by
	\begin{align*}
		\|A_2(t,\cdot)\|_{L^{m_1}}&\lesssim\int_0^{\frac{t}{2}}\tau(t-\theta_1\tau)^{-\frac{n}{2\sigma}(1-\frac{1}{m_1})-1}\|\,[u(\tau,\cdot)]^p\|_{L^1}\,\mathrm{d}\tau\\
		&\lesssim \varepsilon^p\,t^{-\frac{n}{2\sigma}(1-\frac{1}{m_1})-1}\int_0^{\frac{t}{2}}(1+\tau)^{1-\frac{n}{2\sigma}(p-1)}\,\mathrm{d}\tau\,\|(u_0,v_0)\|_{(L^{\infty}\cap L^1)^2}^p,
	\end{align*}
	which shows
	\begin{align*}
		t^{\frac{n}{2\sigma}(1-\frac{1}{m_1})}\|A_2(t,\cdot)\|_{L^{m_1}}\lesssim \varepsilon^p\|(u_0,v_0)\|_{(L^{\infty}\cap L^1)^2}^p\times \begin{cases}
			t^{1-\frac{n}{2\sigma}(p-1)}&\mbox{if}\ \ p<p_{\mathrm{Fuj}}(\frac{n}{2\sigma}),\\
			t^{-1}\ln t&\mbox{if}\ \ p=p_{\mathrm{Fuj}}(\frac{n}{2\sigma}),\\
			t^{-1}&\mbox{if}\ \ p>p_{\mathrm{Fuj}}(\frac{n}{2\sigma}).
		\end{cases}
	\end{align*}
	Its right-hand sides converge to zero as $t\to+\infty$ due to $p>p_{\mathrm{Fuj}}(\frac{n}{\sigma})$.
	\item By the same way as \eqref{Est-05}, i.e. an application of mean value theorem with respect to $x$ and an additional separation via $t^{\frac{1}{4\sigma}}$, one derives
	\begin{align*}
		\|A_3(t,\cdot)\|_{L^{m_1}}&\lesssim\left\|\int_0^{\frac{t}{2}}\int_{|y|\leqslant t^{\frac{1}{4\sigma}}}[G(t,\cdot-y)-G(t,\cdot)]\,[u(\tau,y)]^p\,\mathrm{d}y\,\mathrm{d}\tau\right\|_{L^{m_1}}\\
		&\quad+\left\|\int_0^{\frac{t}{2}}\int_{|y|\geqslant t^{\frac{1}{4\sigma}}}[\,|G(t,\cdot-y)|+|G(t,\cdot)|\,]\,[u(\tau,y)]^p\,\mathrm{d}y\,\mathrm{d}\tau\right\|_{L^{m_1}}\\
		&\lesssim\int_0^{\frac{t}{2}}t^{\frac{1}{4\sigma}}\|\nabla G(t,\cdot)\|_{L^{m_1}}\|\,[u(\tau,\cdot)]^p\|_{L^1}\,\mathrm{d}\tau+\int_0^{\frac{t}{2}}\|G(t,\cdot)\|_{L^{m_1}}\|\,[u(\tau,\cdot)]^p\|_{L^1(|x|\geqslant t^{\frac{1}{4\sigma}})}\,\mathrm{d}\tau\\
		&\lesssim \varepsilon^p\,t^{-\frac{1}{4\sigma}-\frac{n}{2\sigma}(1-\frac{1}{m_1})}\int_0^{+\infty}(1+\tau)^{-\frac{n}{2\sigma}(p-1)}\,\mathrm{d}\tau\,\|(u_0,v_0)\|_{(L^{\infty}\cap L^1)^2}^p\\
		&\quad+t^{-\frac{n}{2\sigma}(1-\frac{1}{m_1})}\int_0^{+\infty}\|\,[u(\tau,\cdot)]^p\|_{L^1(|x|\geqslant t^{\frac{1}{4\sigma}})}\,\mathrm{d}\tau.
	\end{align*}
	According to the fact that 
	\begin{align*}
		\|\,[u(\tau,\cdot)]^p\|_{L^1([0,+\infty)\times \mb{R}^n)}\lesssim\varepsilon^p\int_0^{+\infty}(1+\tau)^{-\frac{n}{2\sigma}(p-1)}\,\mathrm{d}\tau\,\|(u_0,v_0)\|_{(L^{\infty}\cap L^1)^2}^p<+\infty,
	\end{align*}
	we claim
	\begin{align*}
		\lim\limits_{t\to+\infty}\int_0^{+\infty}\|\,[u(\tau,\cdot)]^p\|_{L^1(|x|\geqslant t^{\frac{1}{4\sigma}})}\,\mathrm{d}\tau=0.
	\end{align*}
	That is to say, the third term can be estimated by
	\begin{align*}
		\lim\limits_{t\to+\infty}t^{\frac{n}{2\sigma}(1-\frac{1}{m_1})}\|A_3(t,\cdot)\|_{L^{m_1}}=0.
	\end{align*}
	\item Applying the derived (bounded) $L^{m_1}-L^{m_1}$ estimate in Proposition \ref{Prop-Linear-Lp}, the fourth term can be estimated by
	\begin{align*}
		t^{\frac{n}{2\sigma}(1-\frac{1}{m_1})}\|A_4(t,\cdot)\|_{L^{m_1}}&\lesssim t^{\frac{n}{2\sigma}(1-\frac{1}{m_1})}\int_{\frac{t}{2}}^t\|\,[u(\tau,\cdot)]^p\|_{L^{m_1}}\,\mathrm{d}\tau\lesssim \varepsilon^p\,t^{1-\frac{n}{2\sigma}(p-1)}\|(u_0,v_0)\|_{(L^{\infty}\cap L^1)^2}^p.
	\end{align*}
	\item Applications of Lemma \ref{Lemma-Lp-sharp} and the decay estimate \eqref{Est-03} show
	\begin{align*}
		t^{\frac{n}{2\sigma}(1-\frac{1}{m_1})}\|A_5(t,\cdot)\|_{L^{m_1}}&\lesssim\varepsilon^p\,t^{\frac{n}{2\sigma}(1-\frac{1}{m_1})}\int_{\frac{t}{2}}^{+\infty}(1+\tau)^{-\frac{n}{2\sigma}(p-1)}\,\mathrm{d}\tau\,\|G(t,\cdot)\|_{L^{m_1}} \|(u_0,v_0)\|_{(L^{\infty}\cap L^1)^2}^p\\
		&\lesssim \varepsilon^p\,t^{1-\frac{n}{2\sigma}(p-1)}\|(u_0,v_0)\|_{(L^{\infty}\cap L^1)^2}^p.
	\end{align*}
\end{itemize}
Thanks to $p>p_{\mathrm{Fuj}}(\frac{n}{\sigma})$, namely, $1-\frac{n}{2\sigma}(p-1)<0$, the last obtained estimates conclude
\begin{align*}
\lim\limits_{t\to+\infty}t^{\frac{n}{2\sigma}(1-\frac{1}{m_1})}\left\|\int_0^tK_0^u(t-\tau,|D|)[u(\tau,\cdot)]^p\,\mathrm{d}\tau-\frac{\nu}{\mu+\nu}\,G(t,\cdot)\,\ml{P}_{u^p}\right\|_{L^{m_1}}=0.
\end{align*}
Analogously, due to the assumption $\min\{p,q\}>p_{\mathrm{Fuj}}(\frac{n}{\sigma})$, the other kernels in the integral representation \eqref{Equa-01} can be treated by
\begin{align*}
\lim\limits_{t\to+\infty}t^{\frac{n}{2\sigma}(1-\frac{1}{m})}\left\|w^{\nlin}(t,\cdot)-\frac{\gamma}{\mu+\nu}\,G(t,\cdot)\,\ml{P}_{u^p+v^q}\right\|_{L^{m}}=0,
\end{align*}
where $m=m_1$ and $\gamma=\nu$ if $w=u$; $m=m_2$ and $\gamma=\mu$ if $w=v$.

Combining the derived estimates for $\bar{u}$ and $\bar{v}$ in Proposition \ref{Prop-Lm-profile} by taking $m=m_1$ or $m=m_2$, via the integral representation \eqref{Equa-01} again, we complete the derivation of asymptotic profiles for global in-time solutions $(u,v)$ in the $L^m$ framework.

As $m_1=m_2=2$, using the triangle inequality, for example,
\begin{align*}
\|u(t,\cdot)\|_{L^2}&\geqslant \|G(t,\cdot)\|_{L^2}\left|\varepsilon P_{u_0+v_0}+\ml{P}_{u^p+v^q}\right|-\left\|u(t,\cdot)-\frac{\nu}{\mu+\nu}\,G(t,\cdot)\left(\varepsilon P_{u_0+v_0}+\ml{P}_{u^p+v^q}\right)\right\|_{L^{2}}\\
&\gtrsim t^{-\frac{n}{4\sigma}}\left|\varepsilon P_{u_0+v_0}+\ml{P}_{u^p+v^q}\right|-o(t^{-\frac{n}{4\sigma}})
\end{align*}
as large time $t\gg1$ via \eqref{Est-06}, the optimal large time lower bound estimates for the global in-time solutions $(u,v)$ in the $L^2$ framework can be demonstrated. Our proof is completed.

\section*{Acknowledgments}
The author thanks Wenhui Chen (Guangzhou University) for pointing out the reduction methodology and for some suggestions for Remark 2.5.

\end{document}